\documentclass[runningheads,orivec]{llncs}
\usepackage[T1]{fontenc}
\usepackage{amssymb,amsmath}
\usepackage{graphicx} 
\usepackage{tikz}
\usepackage{xcolor}
\usepackage{float}
\usetikzlibrary{arrows.meta}
\usepackage{comment}
\spnewtheorem{observation}[theorem]{Observation}{}{}

\definecolor{orcidgreen}{RGB}{166,206,57}

\begin{document}

\title{Eccentric Connectivity Index of Cartesian product and Strong product of Strongly Connected Digraphs\thanks{First author is also a part-time research scholar in the Department of Mathematics, Government College Chittur, Palakkad, Kerala, India - 678104, affiliated to University of Calicut, Thenhipalam, Kerala, India - 673635.}}
\titlerunning{Eccentric Connectivity Index $\ldots$}
\author{Vysakh Chakooth\inst{1}\orcidID{0009-0007-9176-3845} \and{Prasanth G. Narasimha-Shenoi\inst{2,3}\orcidID{0000-0002-5850-5410}}
}
\authorrunning{Vysakh Chakooth et al.}
\institute{Department of Mathematics, NSS College Ottappalam, Palakkad, Kerala, India - 679103
\email{v2sakh@gmail.com}
\and
Department of Mathematics, Government College Chittur, Palakkad, Kerala, India - 678104,  \email{prasanthgns@gmail.com}\and 
Department of Collegiate Education, Government of Kerala,
Thiruvananthapuram, Kerala India - 695033
}

\maketitle

\begin{abstract}
Let $G=(V,E)$ be a graph. The \emph{eccentric connectivity index} of $G$ is defined as $$\xi^C(G)=\sum_{u\in V(G)}d_uecc(u)$$ where $d_u$ and $ecc(u)$ are the degree and eccentricity of $u$, respectively. For a strongly connected digraph $D=(V,A)$, the eccentric connectivity index is defined as $$\xi^C(D)=\frac{1}{2}\sum_{u\in V(D)}(d_u^++d_u^-)mecc(u)$$ where $d_u^+$ and $d_u^-$ are the out-degree and in-degree of $u$, respectively, and $mecc(u)$ is its m-eccentricity with respect to the maximum distance $md(u,v)=\max\{\vec d(u,v),\vec d(v,u)\}$.

In this article, give formulas and bounds for the eccentric connectivity index of Cartesian and strong products of strongly connected digraphs and discuss the corresponding equality cases. Also, an attempt is made to study the self-centeredness of these products and establish conditions under which the Cartesian and strong products are self-centered.The results are extended to products of several digraphs.

\keywords{Eccentric connectivity index \and maximum distance \and Cartesian product of digraphs \and strong product of digraphs.}

\end{abstract}

\section{Introduction}
 The \emph{eccentric connectivity index} of simple graphs introduced by Sharma et al., in \cite{sharma1997eccentric}, is a distance–cum–degree based descriptor defined as $$\displaystyle{\xi^C(G) = \sum_{u \in V}  d_u \;ecc(u)}$$ where $d_u$ is the degree and $ecc (u)$ is the eccentricity of the vertex $u$ of the graph $G=(V,E)$. The eccentric connectivity index has been extensively studied in undirected graphs by various authors; see, for example~ \cite{alizadeh2020relation,ashrafi2011eccentric,dovslic2014eccentric,hauweele2019maximum,morgan2011eccentric,zhang2014maximal,zhang2012minimal,zhou2010eccentric} and in composite graphs \cite{azari2022study} due to its strong correlation with molecular properties and  has been widely used as a molecular descriptor in QSAR and  QSPR studies \cite{dutt2010improved,idrees2019eccentricity,mansha2025eccentric} to correlate molecular topology with physicochemical and biological properties.
However, many real-world systems, such as communication, transportation, and biological networks, are inherently directional, leading to asymmetric distances. For this reason, several distance–based indices \cite{dankelmann2023wiener,knor2016digraphs} and degree–based indices \cite{ganie2025bounds,monsalve2021vertex} have already been extended to digraphs , very recently the eccentric connectivity index with respect to metric maximum distance is extended to digraphs in \cite{chakooth2025eccentric}. 

The Cartesian and strong products offer fundamental ways to construct larger digraphs from smaller ones, making it natural to examine how the eccentric connectivity index behaves under these operations. Studying these products also allows the index of the resulting digraph to be related to structural and distance parameters of the factor digraphs. This provides a natural way to determine the eccentric connectivity index of larger digraphs from that of their factors and may be useful in the study of more complex directed networks.

In this paper, the eccentric connectivity index, with respect to the metric maximum distance defined in \cite{chartrand1997distance}, is studied for Cartesian and strong products of strongly connected digraphs. Bounds and equality cases are obtained for both products, and the results are extended to products of several digraphs and their powers. A comparison between the eccentric connectivity indices of the Cartesian and strong products is also presented.


\section{Preliminaries}\label{sec:prelim}
A directed graph (or digraph) $D = (V, A)$ consists of a non-empty finite set $V$ of vertices, and a finite set $A$ of ordered pairs of distinct vertices, called \textit{arcs}, see \cite{bang2008digraphs}. An arc from vertex $u$ to vertex $v$ is denoted by $(u,v)$, indicating a directed edge from $u$ to $v$, and $u$ is called the tail and $v$ is called the head. The \emph{out-degree} of a vertex $u$, denoted by $d^+_u$, is the number of arcs originating from $u$, that is, arcs of the form $(u,v)$. Similarly, the \emph{in-degree} of $u$, denoted by $d^-_u$, is the number of arcs directed towards $u$, That is, arcs of the form $(v,u)$. A digraph $D$ is $r-$regular if and only if $d^+_u=r=d^-_u,$ for all $u \in V(D)$.
 
A \textit{directed walk} (or diwalk) in a digraph $D$ is an alternating sequence $ W = x_1 a_1 x_2 a_2 \ldots x_{k-1} a_{k-1} x_k,
$ where $x_i$ are vertices and $a_j$ are arcs such that each arc $ a_i$ has tail $ x_i $ and head $ x_{i+1}$ for all $ i \in 1,\ldots, k-1$. We denote this by $x_1 \rightarrow{x_2}\rightarrow \cdots \rightarrow{x_{k-1} \rightarrow{x_k}}$. If all the vertices in the walk are distinct, $W$  is called a \textit{directed path} (or dipath). If $x_1 = x_k $, the vertices $ x_1, \ldots, x_{k-1} $ are distinct, and $ k\geq 3 $, then $ W $ is referred to as a \textit{directed cycle} (or dicycle). For a detailed study see \cite{bang2018classes}.  

A digraph $D$ is said to be \textit{strongly connected} (or strong) if, for every pair of distinct vertices $(u, v)$, there exists a directed path from $u$ to $v$. The length of a directed path is the number of arcs it contains.  For any pair of vertices $(u, v)$ in a strongly connected digraph $D$, the shortest directed $u, v$- path is called a directed geodesic, and its length, that is, the number of arcs in it is termed the directed distance, denoted by $\overrightarrow{d}(u, v)$.


 
 If a digraph is not strongly connected, the directed distance between two vertices need not be finite. Therefore, throughout this paper we consider strongly connected digraphs and use the metric ‘\textit{maximum distance}’, abbreviated as \textit{md} was introduced by Chartrand and Tian in \cite{chartrand1997distance}. It is also known as m-distance and is defined as $md(u,v)=\max\{\vec{d}(u,v),\vec{d}(v,u)\}$.

The following definitions are from \cite{chartrand1997distance}.
The $mecc(u)=\max \{md(u,v)\mid v \in V(D)\}=\max  \{ecc^+(u),ecc^-(u)\}$. The $m-radius$, $ mrad(D)$, of a digraph $D$ is defined by  $mrad(D)=\min \{mecc(v)\mid v \in V(D)\}$. The $m-diameter$, $mdiam(D)$, of a digraph $D$ is defined by  $mdiam(D)=\max \{mecc(v)\mid v \in V(D)\}$.
A digraph is $k$-self-centered if its radius and diameter are equal.
Motivated by the total eccentricity of a graph in \cite{doslic2011eccentric}, the total eccentricity of a digraph $D$ is defined as $\zeta(D)=\sum_{u\in V(D)} mecc(u)$, where $mecc(u)$ denotes the eccentricity of the vertex $u\in V(D)$.

\section{Eccentric Connectivity Index of Cartesian Product Digraphs}\label{sec:ecid}
In \cite{chakooth2025eccentric} the \emph{eccentric connectivity} index of strongly connected digraphs without loops and parallel arcs is defined as 
\begin{definition}
Let $D=(V,A)$ be a digraph, the \emph{eccentric connectivity index} of $D$ is defined as 
$$\displaystyle{\xi^C(D) = \frac{1}{2} \sum_{u \in V} ~~  (d_u^++d_u^-)\;mecc(u) }$$
where $d_u^+$, $d_u^-$ are the out-degree, in-degree, and  $mecc(u)$ is the m-eccentricity of the vertex $u$ in $V$. 
\end{definition}
The Cartesian product of digraphs is defined in \cite{hammack2018digraphs} as follows.
 \begin{definition} 
 The Cartesian product of two digraphs $D_1=(V_1,E_1)$ and $D_2=(V_2,E_2)$ with vertex sets $V_1=\{u_1,u_2,\ldots,u_m\}$ and $V_2=\{v_1,v_2,\ldots,v_n\}$ is a digraph $D=D_1\square D_2$ with vertex set $V(D)=V_1\times V_2$ in which there is an edge from vertex $(u_i,v_r)$ to the vertex $(u_j,v_s)$ if either $u_i=u_j$ and $(v_r,v_s)\in E_2$ or $v_r=v_s$ and $(u_i,u_j)\in E_1$.
 \end{definition}
 \begin{definition}
     The Cartesian product of $n$ digraphs $D_1,\ldots,D_n$ is the digraph
$D=D_1\square D_2\square\cdots\square D_n$ with vertex set
$V(D)=\{(x_1,x_2,\ldots,x_n)\mid x_i\in V(D_i)\}$ and there is an edge from the vertex
$x=(x_1,x_2,\ldots,x_n)$ to the vertex $y=(y_1,y_2,\ldots,y_n)$, when
$(x_i,y_i)\in A(D_i)$ for exactly one index $i$ $(i=1,\ldots,n)$ and
$x_j=y_j$ for each index $j\neq i$.
 \end{definition}
Let $D_1$ and $D_2$ be two strongly connected digraphs. The \emph{eccentric connectivity index} of $D_1\square D_2$ is $$\xi^C(D_1 \square D_2)
=\displaystyle{ \frac {1}{2} \sum_{(u_i,v_r)\in V(D_1 \square D_2)}}
d_{(u_i,v_r)} mecc_{D_1\square D_2}(u_i,v_r)$$ 
We first examine the m-eccentricity of vertices in the Cartesian product and establish its relationship with the corresponding m-eccentricities of the factor digraphs, for this we make use of the following definition~\ref{two-sided-ecc} and theorem~\ref{cartesian:md}, corollary~\ref{cartesian:ecc}, proposition~\ref{meccentricity equality} from \cite{tj2022study}.
 \begin{definition}[Definition 2.3.24 of  \cite{tj2022study}]\label{two-sided-ecc}
 A strongly connected digraph D is said to satisfy the two-sided eccentricity property, if for all $u_i \in V(D)$, there exist vertices $u_j ,u_k \in V(D)$ (not
necessarily distinct) such that $mecc(u_i) = \overrightarrow {d}(u_i,u_j) =\overrightarrow d(u_k,u_i).$
\end{definition}
 \begin{theorem}[Theorem 4.3.3 of \cite{tj2022study}]\label{cartesian:md}
  Let $D_1$ and $D_2$ be two strongly connected digraphs. Then
  $md_{D_1\square D_2}\big((u_i,v_r),(u_j,v_s)\big)\leq md_{D_1}(u_i,u_j)+md_{D_2}(v_r,v_s)$\\
for all $(u_i,v_r),(u_j,v_s)\in V(D_1\square D_2)$. 
 \end{theorem}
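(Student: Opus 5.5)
The plan is to reduce the inequality for the maximum distance to the corresponding inequalities for directed distances, and handle those by building explicit dipaths in $D_1\square D_2$.

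\textbf{Step 1 (one-way bound).} I will first show that for all vertices
$$\vec d_{D_1\square D_2}\big((u_i,v_r),(u_j,v_s)\big)\le \vec d_{D_1}(u_i,u_j)+\vec d_{D_2}(v_r,v_s).$$
Take a directed geodesic $u_i=x_0\to x_1\to\cdots\to x_p=u_j$ in $D_1$ with $p=\vec d_{D_1}(u_i,u_j)$. Take a directed geodesic $v_r=y_0\to\cdots\to y_q=v_s$ in $D_2$ with $q=\vec d_{D_2}(v_r,v_s)$. Both exist because $D_1$ and $D_2$ are strongly connected. By the definition of the Cartesian product, each arc $(x_{t},x_{t+1})\in A(D_1)$ gives an arc $(x_t,v_r)\to(x_{t+1},v_r)$. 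Likewise each arc $(y_t,y_{t+1})\in A(D_2)$ gives an arc $(u_j,y_t)\to(u_j,y_{t+1})$. Concatenating these yields a directed walk
$$(u_i,v_r)\to\cdots\to(u_j,v_r)\to\cdots\to(u_j,v_s)$$
of length $p+q$. A directed walk contains a dipath between its endpoints of no greater length, so the bound follows. The reverse bound $\vec d\big((u_j,v_s),(u_i,v_r)\big)\le \vec d_{D_1}(u_j,u_i)+\vec d_{D_2}(v_s,v_r)$ is the same statement with the roles of the two vertices exchanged.

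\textbf{Step 2 (take maxima).} Since $\vec d_{D_1}(u_i,u_j)\le md_{D_1}(u_i,u_j)$, $\vec d_{D_1}(u_j,u_i)\le md_{D_1}(u_i,u_j)$, and similarly in $D_2$, both directed distances in the product are at most $md_{D_1}(u_i,u_j)+md_{D_2}(v_r,v_s)$. Taking the maximum of the two directed distances then gives
$$md_{D_1\square D_2}\big((u_i,v_r),(u_j,v_s)\big)\le md_{D_1}(u_i,u_j)+md_{D_2}(v_r,v_s).$$

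\textbf{Main obstacle.} The only delicate point is the construction in Step 1. One must check that the two segments really are arcs of $D_1\square D_2$: one coordinate is held fixed while the other follows an arc of its factor, which is exactly the product definition. The degenerate cases $p=0$ or $q=0$ are trivial. Equality need not hold, because the maxima in $D_1$ and $D_2$ may be attained in opposite directions. This explains why the statement is only an inequality and why the two-sided eccentricity property (Definition~\ref{two-sided-ecc}) will be needed for the equality results that follow.
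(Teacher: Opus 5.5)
Your proof is correct. The concatenated walk (actually already a dipath) through $(u_j,v_r)$ gives $\vec d_{D_1\square D_2}\le \vec d_{D_1}+\vec d_{D_2}$ in each direction, and bounding each summand by the corresponding $md$ then gives the inequality for $md_{D_1\square D_2}$.

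The paper only quotes this result from \cite{tj2022study} without proof. Your route is the standard argument: directed distance in $D_1\square D_2$ is in fact exactly additive, and $\max\{a+b,c+d\}\le\max\{a,c\}+\max\{b,d\}$.
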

 \begin{corollary}[Theorem 4.3.4 of \cite{tj2022study}]\label{cartesian:ecc}
 Let $D_1$ and $D_2$ be two strongly connected digraphs. Then $mecc_{D_1\square D_2}(u_i,v_r)\leq mecc_{D_1}(u_i)+mecc_{D_2}(v_r)\\
$ for all $u_i\in V(D_1)$ and $v_r\in V(D_2)$.
 \end{corollary}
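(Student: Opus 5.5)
Corollary~\ref{cartesian:ecc} is immediate from Theorem~\ref{cartesian:md}, so the plan is simply to take the maximum of that distance bound over all second arguments.

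Fix a vertex $(u_i,v_r)$ of $D_1\square D_2$. By definition,
$$mecc_{D_1\square D_2}(u_i,v_r)=\max\{md_{D_1\square D_2}((u_i,v_r),(u_j,v_s))\mid (u_j,v_s)\in V(D_1\square D_2)\}.$$
This maximum is finite because $D_1\square D_2$ is strongly connected whenever both factors are. Indeed, from $(u_i,v_r)$ we can first move inside the $D_1$-layer $\{(x,v_r) : x\in V(D_1)\}$, which is isomorphic to $D_1$, along a $u_i,u_j$-dipath. We then move inside the $D_2$-layer $\{(u_j,y) : y\in V(D_2)\}$, which is isomorphic to $D_2$, along a $v_r,v_s$-dipath. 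Concatenating the two gives a dipath from $(u_i,v_r)$ to $(u_j,v_s)$.

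For each $(u_j,v_s)$, Theorem~\ref{cartesian:md} gives
$$md_{D_1\square D_2}((u_i,v_r),(u_j,v_s))\le md_{D_1}(u_i,u_j)+md_{D_2}(v_r,v_s).$$
By the definition of m-eccentricity in each factor, $md_{D_1}(u_i,u_j)\le mecc_{D_1}(u_i)$ and $md_{D_2}(v_r,v_s)\le mecc_{D_2}(v_r)$. Hence every term in the maximum is at most $mecc_{D_1}(u_i)+mecc_{D_2}(v_r)$, and taking the maximum over $(u_j,v_s)$ yields the claimed inequality.

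There is no real obstacle here. The only point needing care is that $md$ is defined in $D_1\square D_2$, so strong connectivity of the product must be noted, which the layer argument above provides. If one preferred to avoid citing Theorem~\ref{cartesian:md}, the same layer argument proves it directly: the concatenated dipath shows $\vec d((u_i,v_r),(u_j,v_s))\le \vec d(u_i,u_j)+\vec d(v_r,v_s)$, and the symmetric inequality holds in the reverse direction. Since each directed distance is at most the corresponding m-distance, both directed distances in the product are at most $md_{D_1}(u_i,u_j)+md_{D_2}(v_r,v_s)$, and the bound on their maximum follows.
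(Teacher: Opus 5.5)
Your proof is correct and follows the intended route. The paper quotes this result from \cite{tj2022study} without proof, placing it as a corollary of Theorem~\ref{cartesian:md}, and bounding each $md_{D_1\square D_2}\big((u_i,v_r),(u_j,v_s)\big)$ by $mecc_{D_1}(u_i)+mecc_{D_2}(v_r)$ and then maximizing over $(u_j,v_s)$ is exactly that derivation; your layer argument for strong connectivity of $D_1\square D_2$ and for the distance bound is a valid supplement the paper leaves implicit.
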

 \begin{proposition}[Proposition 4.3.5 of \cite{tj2022study}]\label{meccentricity equality}
Let $D_1$ and $D_2$ be two strongly connected digraphs. A necessary and sufficient condition for every vertex $(u_i,v_r)\in V(D_1\square D_2)$ to satisfy $mecc_{D_1\square D_2}(u_i,v_r)=mecc_{D_1}(u_i)+mecc_{D_2}(v_r)
$ is that either $D_1$ or $D_2$ satisfies the two-sided eccentricity property.
 \end{proposition}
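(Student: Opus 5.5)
The plan is to reduce everything to the additivity of directed distance in the Cartesian product. First I would establish, or recall as standard, that for $(u_i,v_r),(u_j,v_s)\in V(D_1\square D_2)$,
$$\vec d_{D_1\square D_2}\big((u_i,v_r),(u_j,v_s)\big)=\vec d_{D_1}(u_i,u_j)+\vec d_{D_2}(v_r,v_s).$$
Any dipath in the product projects onto diwalks in the two factors, with each arc contributing to exactly one projection. Conversely, concatenating a geodesic in the $D_1$-layer with a geodesic in the $D_2$-layer realises the sum. Consequently
$$md\big((u_i,v_r),(u_j,v_s)\big)=\max\{\vec d_1(u_i,u_j)+\vec d_2(v_r,v_s),\ \vec d_1(u_j,u_i)+\vec d_2(v_s,v_r)\},$$
and Corollary~\ref{cartesian:ecc} already gives the upper bound $mecc(u_i,v_r)\le mecc(u_i)+mecc(v_r)$. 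It then remains to show when this bound is attained at every vertex.

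\emph{Sufficiency.} Assume $D_1$ has the two-sided eccentricity property; the case of $D_2$ is symmetric. Fix $(u_i,v_r)$ and choose $v_s$ with $md_2(v_r,v_s)=mecc(v_r)$. If $\vec d_2(v_r,v_s)=mecc(v_r)$, take $u_j$ with $\vec d_1(u_i,u_j)=mecc(u_i)$. Then the forward distance from $(u_i,v_r)$ to $(u_j,v_s)$ equals $mecc(u_i)+mecc(v_r)$. If instead $\vec d_2(v_s,v_r)=mecc(v_r)$, take $u_k$ with $\vec d_1(u_k,u_i)=mecc(u_i)$. Then the backward distance from $(u_k,v_s)$ to $(u_i,v_r)$ gives the sum. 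Either way the upper bound is attained.

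\emph{Necessity.} Suppose neither factor has the property. Then there is $u\in V(D_1)$ with $ecc^+(u)\ne ecc^-(u)$, and $v\in V(D_2)$ with $ecc^+(v)\ne ecc^-(v)$. Suppose these have opposite orientations, say $ecc^+(u)=mecc(u)>ecc^-(u)$ and $ecc^-(v)=mecc(v)>ecc^+(v)$. Then for every $(u',v')$ we have $\vec d_1(u,u')+\vec d_2(v,v')\le mecc(u)+ecc^+(v)<mecc(u)+mecc(v)$. Similarly $\vec d_1(u',u)+\vec d_2(v',v)\le ecc^-(u)+mecc(v)<mecc(u)+mecc(v)$. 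Hence $mecc(u,v)<mecc(u)+mecc(v)$, and equality fails at $(u,v)$.

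The main obstacle is the case where every defective vertex in both factors has the same orientation, say $ecc^+>ecc^-$ throughout. In that case the argument above breaks down, since the two out-directions can be aligned. I would first try to prove that a strongly connected digraph cannot have all its defective vertices oriented one way. A natural attempt is to take $w$ with $\vec d(u,w)=ecc^+(u)$ and examine $ecc^-(w)$ versus $ecc^+(w)$, possibly via an extremal choice of $u$ maximising $ecc^+$. Failing that, I would pick $u$ and $v$ so that their orientations oppose, or reread the property as the paper intends it to handle this case. I expect this step to be where the real work, or a needed refinement of the hypothesis, lies.
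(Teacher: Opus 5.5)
You have a genuine gap, and it cannot be closed. Your sufficiency argument is correct, and so is the opposite-orientation case of necessity. But the remaining same-orientation case is not merely hard: the necessity half of the statement is false there. The lemma you propose to try, that a strong digraph cannot have all its defective vertices oriented one way, is also false. The paper does not prove this proposition; it quotes it from \cite{tj2022study}. In its conclusion it points out, via Figure~\ref{fig:D-cartesian-D}, that this converse direction fails.

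Concretely, take the digraph $D$ of that figure, with arcs $a\leftrightarrow b$, $a\leftrightarrow c$, $b\to d$, $c\to b$, $d\to a$. One checks that $ecc^+(x)=2$ for every vertex $x$, and $ecc^-(b)=ecc^-(c)=ecc^-(d)=2$, but $ecc^-(a)=1$. So $a$ is the only defective vertex, it has $ecc^+>ecc^-$, and $D$ fails the two-sided eccentricity property. Yet your own additivity formula gives
$$mecc_{D\square D}(x,y)=\max\{ecc^+(x)+ecc^+(y),\ ecc^-(x)+ecc^-(y)\}=4=mecc_D(x)+mecc_D(y)$$
for every $(x,y)$. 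So with $D_1=D_2=D$, equality holds at every vertex while neither factor has the property.

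Your formula in fact yields the true characterization directly. Equality at $(x,y)$, with $x\in V(D_1)$ and $y\in V(D_2)$, fails exactly when one of $x,y$ has $ecc^+>ecc^-$ and the other has $ecc^->ecc^+$. Hence equality holds at every vertex of $D_1\square D_2$ if and only if one of the following holds:
\begin{itemize}
\item $D_1$ or $D_2$ has the two-sided eccentricity property;
\item $ecc^+\ge ecc^-$ at every vertex of both factors;
\item $ecc^-\ge ecc^+$ at every vertex of both factors.
\end{itemize}
Your suspicion that ``a needed refinement of the hypothesis'' is required was right. What you can prove is sufficiency as stated, together with necessity for this weaker, corrected condition. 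No extremal choice of vertex will rule out the same-orientation configuration, because $D$ realizes it.
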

   \begin{proposition}[Proposition 4.4.2 of \cite{tj2022study}]\label{Diam equality}
    Let $D_1$ and $D_2$ be two strong digraphs. Then $mdiam(D_1\square D_2)=mdiam(D_1)+mdiam(D_2).$
 \end{proposition}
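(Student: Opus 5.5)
The plan is to prove $mdiam(D_1\square D_2)=mdiam(D_1)+mdiam(D_2)$ by establishing the two inequalities separately.

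\textbf{Upper bound.} This is immediate from Corollary~\ref{cartesian:ecc}. For every vertex $(u_i,v_r)$ we have $mecc_{D_1\square D_2}(u_i,v_r)\le mecc_{D_1}(u_i)+mecc_{D_2}(v_r)\le mdiam(D_1)+mdiam(D_2)$. Taking the maximum over all vertices gives $mdiam(D_1\square D_2)\le mdiam(D_1)+mdiam(D_2)$.

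\textbf{Lower bound.} First I would record the basic distance fact for Cartesian products of digraphs:
$$\vec d_{D_1\square D_2}\big((u_i,v_r),(u_j,v_s)\big)=\vec d_{D_1}(u_i,u_j)+\vec d_{D_2}(v_r,v_s).$$
The inequality $\ge$ holds because every arc of a directed walk in the product changes exactly one coordinate, and that change is an arc of the corresponding factor. Hence the projections of a walk to $D_1$ and to $D_2$ are directed walks whose lengths add up to the length of the walk. The inequality $\le$ holds because we can concatenate a geodesic in the first coordinate with a geodesic in the second.

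Now pick $u_i,u_j$ with $md_{D_1}(u_i,u_j)=mdiam(D_1)$, and assume without loss of generality that $\vec d_{D_1}(u_i,u_j)=mdiam(D_1)$. Likewise pick $v_r,v_s$ with $md_{D_2}(v_r,v_s)=mdiam(D_2)$.

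The main obstacle is orientation: the directions realizing the two diameters may disagree.
\begin{itemize}
\item If $\vec d_{D_2}(v_r,v_s)=mdiam(D_2)$, then $\vec d\big((u_i,v_r),(u_j,v_s)\big)=mdiam(D_1)+mdiam(D_2)$.
\item Otherwise $\vec d_{D_2}(v_s,v_r)=mdiam(D_2)$. In that case I use the pair $(u_i,v_s)$ and $(u_j,v_r)$ instead, which gives $\vec d\big((u_i,v_s),(u_j,v_r)\big)=\vec d_{D_1}(u_i,u_j)+\vec d_{D_2}(v_s,v_r)=mdiam(D_1)+mdiam(D_2)$.
\end{itemize}
The point is that in the product we are free to choose which endpoint carries which coordinate, so both directed distances can be aligned in the same direction.

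In either case some pair of vertices has $md\ge mdiam(D_1)+mdiam(D_2)$. Therefore $mdiam(D_1\square D_2)\ge mdiam(D_1)+mdiam(D_2)$, and combined with the upper bound this gives equality. Note that this argument does not require the two-sided eccentricity property: the diameter is a global maximum, and the swapping trick removes the orientation issue that makes vertexwise equality (Proposition~\ref{meccentricity equality}) fail in general.
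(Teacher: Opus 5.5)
Your proof is correct. The paper does not prove this proposition itself; it quotes it from \cite{tj2022study} without proof, so there is no in-paper argument to compare against, and yours is the natural one.

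The only ingredient you add beyond the quoted results is the additive formula $\vec d_{D_1\square D_2}\big((u_i,v_r),(u_j,v_s)\big)=\vec d_{D_1}(u_i,u_j)+\vec d_{D_2}(v_r,v_s)$. Your justification of it is sound: it uses that the factors have no loops, so each product arc moves exactly one coordinate along an arc of that factor.

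Two small remarks. First, this formula alone also gives the upper bound, since it yields $md_{D_1\square D_2}\le md_{D_1}+md_{D_2}$ pairwise (Theorem~\ref{cartesian:md}). So you could make the argument fully self-contained without invoking Corollary~\ref{cartesian:ecc}. Second, your case split in the lower bound is just a relabelling of $v_r$ and $v_s$, and could equally be absorbed into a second ``without loss of generality''.

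Your closing remark correctly explains why the diameter is additive for all strong digraphs while the vertexwise identity needs the two-sided eccentricity property. For the diameter you may choose both endpoints freely. At a fixed vertex $(u_i,v_r)$, the orientations realizing $mecc_{D_1}(u_i)$ and $mecc_{D_2}(v_r)$ cannot be aligned by swapping.
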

We now present our main results concerning the eccentric connectivity index of Cartesian products of digraphs with respect to the metric 'maximum distance'. More precisely, the following theorem gives an explicit expression for the metric eccentric connectivity index of the Cartesian product of two strongly connected digraphs.\\
 The theorem~\ref{thm.ecc.index.cartesian} gives an upperbound for the eccentric connectivity index of cartesian product of two strongly connected digraphs. To prove the same, the following theorem~\ref{ecc-connectivity-bound} is needed and is quoted for the same of completion.
 \begin{theorem}[Theorem 4 of \cite{chakooth2025eccentric}]\label{ecc-connectivity-bound}
    If $D$ is a strongly connected digraph with $n$ vertices and $a$ arcs, then
$a.mrad(D) \leq \xi^C (D) \leq a.mdiam(D)$.  
 \end{theorem}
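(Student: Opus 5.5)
The plan is a direct double-counting argument: bound each vertex's m-eccentricity between $mrad(D)$ and $mdiam(D)$, weight by degree, and use the handshake identity for digraphs.

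First I will record that identity. Every arc $(x,y)\in A(D)$ adds exactly $1$ to $d_x^+$ and exactly $1$ to $d_y^-$. Summing over all vertices therefore gives
$$\sum_{u\in V}d_u^+=\sum_{u\in V}d_u^-=a,\qquad\text{hence}\qquad \frac12\sum_{u\in V}(d_u^++d_u^-)=a .$$
Second, the definitions of $mrad(D)$ and $mdiam(D)$ as the minimum and maximum of $mecc$ over $V(D)$ give, for every $u\in V(D)$,
$$mrad(D)\le mecc(u)\le mdiam(D).$$
These quantities are finite because $D$ is strongly connected, so every $md(u,v)$ is finite.

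Third, I multiply this chain by the nonnegative weight $\tfrac12(d_u^++d_u^-)$ and sum over $u\in V$. The middle term becomes $\xi^C(D)$ by the definition of the eccentric connectivity index. The outer terms become $mrad(D)\cdot\tfrac12\sum_u(d_u^++d_u^-)=a\cdot mrad(D)$ and, likewise, $a\cdot mdiam(D)$, by the first step. This yields $a\cdot mrad(D)\le \xi^C(D)\le a\cdot mdiam(D)$.

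No step is a genuine obstacle; the only points needing care are degenerate cases. If $n=1$, then $a=0$ and all three quantities are $0$. If $n\ge 2$, strong connectivity forces $d_u^+,d_u^-\ge 1$ for every $u$, so every weight is strictly positive. It follows that either equality holds exactly when $mecc(u)$ is constant on $V(D)$, i.e.\ exactly when $D$ is self-centered. I would note this equality case since the later product theorems appeal to it.
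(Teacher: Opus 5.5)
Your proof is correct. It is the standard argument: bound each $mecc(u)$ between $mrad(D)$ and $mdiam(D)$, weight by $\tfrac12(d_u^++d_u^-)$, and use $\sum_u d_u^+=\sum_u d_u^-=a$. The paper quotes this theorem from the authors' earlier work without proof, so there is no in-paper proof to compare against, but this is evidently the argument it rests on. Your equality analysis, using strictly positive weights when $n\ge 2$, is also right, and it gives exactly the self-centered equality case the paper later relies on for $D_1\square D_2$ and $D_1\boxtimes D_2$.
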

 Applying the theorem ~\ref{ecc-connectivity-bound}, it can be seen that $\xi^C(D_1 \square D_2)$ is bounded by $\bigl(n_1e_2+n_2e_1\bigl)\bigl(mdiam(D_1)+ mdiam(D_2)\bigl)$, which may be quite large. But the theorem \ref{thm.ecc.index.cartesian} will give an upperbound which is very much less than $\bigl(n_1e_2+n_2e_1\bigl)\bigl(mdiam(D_1)+ mdiam(D_2)\bigl)$ for general strongly connected digraphs.
 \begin{theorem}\label{thm.ecc.index.cartesian}
     If $D_1$ and $D_2$ are two strongly connected digraphs with $n_1,n_2$ vertices and $e_1,e_2$ arcs respectively, then $\xi^C(D_1 \square D
     _2) \le n_2\xi^C(D_1)+n_1\xi^C(D_2)+e_1\zeta(D_2)+e_2\zeta(D_1)$.  
 \end{theorem}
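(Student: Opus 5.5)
The plan is to compute the degrees in $D_1\square D_2$ exactly, bound each m-eccentricity using Corollary~\ref{cartesian:ecc}, and then expand the resulting double sum. The only inequality used is that corollary; every other step is an identity.

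First, the degrees. By the definition of the Cartesian product, the out-neighbours of $(u_i,v_r)$ are the vertices $(u_j,v_r)$ with $(u_i,u_j)\in E_1$, together with the vertices $(u_i,v_s)$ with $(v_r,v_s)\in E_2$. These two sets are disjoint, since $D_1$ and $D_2$ have no loops. Hence $d^+_{(u_i,v_r)}=d^+_{u_i}+d^+_{v_r}$, and in the same way $d^-_{(u_i,v_r)}=d^-_{u_i}+d^-_{v_r}$. Writing $d_u=d_u^++d_u^-$, this gives $d_{(u_i,v_r)}=d_{u_i}+d_{v_r}$.

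Next, the bound. Corollary~\ref{cartesian:ecc} gives $mecc_{D_1\square D_2}(u_i,v_r)\le mecc_{D_1}(u_i)+mecc_{D_2}(v_r)$. Since all degrees are nonnegative, we obtain
\[
\xi^C(D_1\square D_2)\le \frac12\sum_{i=1}^{m}\sum_{r=1}^{n}\bigl(d_{u_i}+d_{v_r}\bigr)\bigl(mecc_{D_1}(u_i)+mecc_{D_2}(v_r)\bigr).
\]
Expanding the product gives four double sums, which we evaluate one at a time.
\begin{itemize}
\item $\frac12\sum_{i,r} d_{u_i}\,mecc_{D_1}(u_i)$ equals $n_2\xi^C(D_1)$, because the summand does not depend on $r$.
\item $\frac12\sum_{i,r} d_{v_r}\,mecc_{D_2}(v_r)$ equals $n_1\xi^C(D_2)$, for the same reason with the roles of the factors exchanged.
\item $\frac12\sum_{i,r} d_{u_i}\,mecc_{D_2}(v_r)$ factors as $\frac12\bigl(\sum_i d_{u_i}\bigr)\zeta(D_2)$. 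By the handshake identity $\sum_i (d^+_{u_i}+d^-_{u_i})=2e_1$, this equals $e_1\zeta(D_2)$.
\item Symmetrically, the fourth term equals $e_2\zeta(D_1)$.
\end{itemize}
Adding the four terms gives exactly the stated bound.

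The main point needing care is the degree identity. One must make sure there is no double counting between the two kinds of neighbours, and that the factor $\frac12$ combines with the handshake sum to give $e_1$ rather than $2e_1$. The rest is routine expansion. By Proposition~\ref{meccentricity equality}, the bound holds with equality whenever one of the factors has the two-sided eccentricity property.
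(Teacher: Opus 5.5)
Your proposal is correct and follows essentially the same route as the paper: bound $mecc_{D_1\square D_2}(u_i,v_r)$ by $mecc_{D_1}(u_i)+mecc_{D_2}(v_r)$ via Corollary~\ref{cartesian:ecc}, then expand the resulting double sum into four pieces and evaluate the cross terms with the handshake identity. Your explicit check that $d_{(u_i,v_r)}=d_{u_i}+d_{v_r}$, which the paper uses without comment, is a worthwhile addition.
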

 \begin{proof}
 Using the definition of eccentric connectivity index of digraphs, 
     \begin{align*}
\xi^C(D_1 \square D_2)
&=\frac {1}{2} \sum_{(u_i,v_r)\in V(D_1 \square D_2)}
d_{(u_i,v_r)} mecc_{D_1\square D_2}(u_i,v_r)\\ & \le \frac {1}{2} \sum_{(u_i,v_r)\in V(D_1 \square D_2)}
d_{(u_i,v_r)}\left[ mecc_{D_1}(u_i)+mecc_{D_2}(v_r) \right ]\\
&=\frac {1}{2}\sum_{v_r\in V(D_2)}\sum_{u_i\in V(D_1)}
d_{u_i}mecc_{D_1}(u_i)
+\frac {1}{2}\sum_{u_i\in V(D_1)}\sum_{v_r\in V(D_2)}
d_{v_r}mecc_{D_2}(v_r)\\
&\quad+\frac {1}{2}\sum_{u_i\in V(D_1)} d_{u_i}
\sum_{v_r\in V(D_2)}mecc_{D_2}(v_r)
+\frac {1}{2}\sum_{v_r\in V(D_2)}d_{v_r}
\sum_{u_i\in V(D_1)}mecc_{D_1}(u_1)\\
&= n_2\xi^C(D_1)+n_1 \xi^C(D_2)
+e_1\zeta(D_2)+e_2 \zeta(D_1).
\end{align*}
Hence $\xi^C(D_1 \square D
     _2) \le n_2\xi^C(D_1)+n_1\xi^C(D_2)+e_1\zeta(D_2)+e_2\zeta(D_1)$.\qed
 \end{proof} 
  By Proposition \ref{meccentricity equality}, $mecc_{D_1\square D_2}(u_i,v_r)=mecc_{D_1}(u_i)+mecc_{D_2}(v_r)$ if and only if  either $D_1$ or $D_2$ satisfies the two-sided eccentricity property. Then we have the following Corollary.
\begin{corollary}\label{equality}
    Equality of the above theorem holds if and only if $D_1$ or $D_2$ satisfy the two-sided eccentricity property.
\end{corollary}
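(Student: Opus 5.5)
The proof of Theorem~\ref{thm.ecc.index.cartesian} has exactly one inequality: the term-by-term bound $mecc_{D_1\square D_2}(u_i,v_r)\le mecc_{D_1}(u_i)+mecc_{D_2}(v_r)$ from Corollary~\ref{cartesian:ecc}. Everything after that line is an identity. So equality in the theorem holds if and only if
\[
\frac12\sum_{(u_i,v_r)} d_{(u_i,v_r)}\Bigl[mecc_{D_1}(u_i)+mecc_{D_2}(v_r)-mecc_{D_1\square D_2}(u_i,v_r)\Bigr]=0 .
\]
The plan is to reduce this condition to vertexwise equality and then invoke Proposition~\ref{meccentricity equality}.

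\emph{Sufficiency.} Suppose $D_1$ or $D_2$ satisfies the two-sided eccentricity property. Proposition~\ref{meccentricity equality} gives $mecc_{D_1\square D_2}(u_i,v_r)=mecc_{D_1}(u_i)+mecc_{D_2}(v_r)$ for every vertex of the product. The bracket therefore vanishes identically, and the chain of relations in the proof becomes a chain of equalities.

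\emph{Necessity.} Every bracket is nonnegative by Corollary~\ref{cartesian:ecc}. Every weight is strictly positive, since $d_{(u_i,v_r)}=d^+_{u_i}+d^-_{u_i}+d^+_{v_r}+d^-_{v_r}$ is the total degree in the product.
\begin{itemize}
\item This degree identity holds because the out- and in-neighbours of $(u_i,v_r)$ in $D_1\square D_2$ are exactly those obtained by moving along an arc in one coordinate.
\item Positivity holds provided at least one factor has an arc. In a strongly connected digraph on at least two vertices every vertex has $d^+,d^-\ge 1$.
\end{itemize}
A sum of nonnegative terms with positive weights is zero only if every term is zero. Hence equality forces $mecc_{D_1\square D_2}(u_i,v_r)=mecc_{D_1}(u_i)+mecc_{D_2}(v_r)$ at every vertex. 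The "only if" direction of Proposition~\ref{meccentricity equality} then yields that $D_1$ or $D_2$ has the two-sided eccentricity property.

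The main obstacle is the degenerate case where both factors are single vertices. Then the product is $K_1$, both sides of the theorem are $0$, and the weights are not positive. However, $K_1$ trivially satisfies the two-sided property, since $mecc(u)=0=\vec d(u,u)$. So the statement still holds, and I would dispose of this case in a separate line. Otherwise the argument is just positivity of the degree weights together with the cited proposition.
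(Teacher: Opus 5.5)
Your route is the same as the paper's, which also just reduces to vertexwise additivity and cites Proposition~\ref{meccentricity equality}. Your positivity-of-weights argument correctly supplies the step the paper leaves implicit. Equality in Theorem~\ref{thm.ecc.index.cartesian} holds if and only if $mecc_{D_1\square D_2}(u,v)=mecc_{D_1}(u)+mecc_{D_2}(v)$ at every vertex, and your sufficiency direction is fine. The necessity step fails, however. The ``only if'' half of Proposition~\ref{meccentricity equality} is false, so no argument can deliver it, and the ``only if'' of the corollary is false as stated; the paper's own proof has the same defect. Every arc of the Cartesian product changes one coordinate along an arc, so directed distance there is additive. Hence $mecc_{D_1\square D_2}(u,v)=\max\{ecc^+(u)+ecc^+(v),\,ecc^-(u)+ecc^-(v)\}$. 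This equals $mecc(u)+mecc(v)$ exactly when some sign $\sigma\in\{+,-\}$ satisfies both $ecc^\sigma(u)=mecc(u)$ and $ecc^\sigma(v)=mecc(v)$. The two-sided property, namely $ecc^+=ecc^-$ at every vertex, is only one way to guarantee this.

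A concrete counterexample is the digraph $D$ of Figure~\ref{fig:D-cartesian-D}, with arcs $a\leftrightarrow b$, $a\leftrightarrow c$, $c\to b$, $b\to d$, $d\to a$. It has $ecc^+\equiv 2\equiv mecc$. But $ecc^-(a)=1$, since $b,c,d$ are all in-neighbours of $a$, so $D$ lacks the two-sided property. Nevertheless $mecc_{D\square D}\equiv 4=mecc(x)+mecc(y)$, so equality holds in Theorem~\ref{thm.ecc.index.cartesian} for $D_1=D_2=D$. The paper's Conclusion itself flags this example against Proposition~\ref{meccentricity equality}. What your positivity argument actually establishes is a different criterion: equality holds if and only if one factor has the two-sided property, or $mecc=ecc^+$ at every vertex of both factors, or $mecc=ecc^-$ at every vertex of both factors.
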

\begin{corollary}
    If $D$ is a strongly connected digraph which satisfies the two-sided eccentricity property then
    $\xi^C(D \square D
     ) = 2n\xi^C(D)+2e\zeta(D)$.
\end{corollary}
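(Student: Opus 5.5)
This is a direct specialization of Theorem~\ref{thm.ecc.index.cartesian} together with Corollary~\ref{equality}, taking $D_1=D_2=D$, $n_1=n_2=n$, $e_1=e_2=e$.

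First, I will note that $D$ satisfies the two-sided eccentricity property. Hence the hypothesis of Proposition~\ref{meccentricity equality} holds for the pair $(D,D)$, and for every vertex $(u_i,u_j)$ of $D\square D$ we get $mecc_{D\square D}(u_i,u_j)=mecc_D(u_i)+mecc_D(u_j)$. This is exactly what turns the single inequality in the proof of Theorem~\ref{thm.ecc.index.cartesian} into an equality, as recorded in Corollary~\ref{equality}. So
$$\xi^C(D\square D)=n\xi^C(D)+n\xi^C(D)+e\zeta(D)+e\zeta(D)=2n\xi^C(D)+2e\zeta(D).$$

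The one point to check is the identity used in the last line of that proof: that $\tfrac12\sum_{u_i\in V(D_1)}d_{u_i}=e_1$, where $d_{u_i}$ is the contribution $d^+_{u_i}+d^-_{u_i}$ from the first coordinate. In $D_1\square D_2$ we have $d^\pm_{(u_i,v_r)}=d^\pm_{u_i}+d^\pm_{v_r}$, so the total degree splits as $(d^+_{u_i}+d^-_{u_i})+(d^+_{v_r}+d^-_{v_r})$. Summing $d^+_{u_i}+d^-_{u_i}$ over $V(D_1)$ gives $2e_1$, and the factor $\tfrac12$ then gives $e_1\zeta(D_2)$. The cross terms therefore produce $e\zeta(D)$ twice, and the pure terms produce $n\xi^C(D)$ twice.

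There is no real obstacle here; the only care needed is this degree bookkeeping. To conclude, I will substitute the identical parameters and collect the like terms.
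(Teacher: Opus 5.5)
Your proposal is correct and follows the paper's own argument: the corollary is Theorem~\ref{thm.ecc.index.cartesian} specialized to $D_1=D_2=D$, with the sufficiency direction of Proposition~\ref{meccentricity equality} (recorded in Corollary~\ref{equality}) turning the single inequality into an equality. Your degree bookkeeping, $d_{(u_i,v_r)}=d_{u_i}+d_{v_r}$ together with $\sum_{u}d_u=2e$, is also right, and you only use the ``if'' direction, which is all that is needed.
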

 The equality of the theorem \ref{thm.ecc.index.cartesian} holds if and only if $D_1 \square D_2$ is self-centered.   The necessary and sufficient condition for $D_1 \square D_2$ to be self-centered is given in the following theorem. 
\begin{theorem}
Let $D_1$ and $D_2$ be two self-centered strongly connected digraphs. If  $D_1$ or $D_2$ satisfy the two-sided eccentricity property, then the Cartesian product $D_1\square D_2$ is self-centered . 
\end{theorem}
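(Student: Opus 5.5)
The plan is to compute every m-eccentricity in $D_1\square D_2$ exactly by means of Proposition~\ref{meccentricity equality}, and then to observe that the resulting value does not depend on the vertex.

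\emph{Fixing notation.} Since $D_1$ is self-centered, we have $mrad(D_1)=mdiam(D_1)$. Call this common value $k_1$, so that $mecc_{D_1}(u_i)=k_1$ for every $u_i\in V(D_1)$. In the same way, write $k_2$ for the common value $mecc_{D_2}(v_r)$, $v_r\in V(D_2)$.

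\emph{Exact eccentricities in the product.} By hypothesis, $D_1$ or $D_2$ satisfies the two-sided eccentricity property. The sufficiency direction of Proposition~\ref{meccentricity equality} then gives
\[
mecc_{D_1\square D_2}(u_i,v_r)=mecc_{D_1}(u_i)+mecc_{D_2}(v_r)=k_1+k_2
\]
for every vertex $(u_i,v_r)\in V(D_1\square D_2)$.

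\emph{Conclusion.} Every vertex of $D_1\square D_2$ therefore has m-eccentricity $k_1+k_2$. Hence
\[
mrad(D_1\square D_2)=mdiam(D_1\square D_2)=k_1+k_2,
\]
and $D_1\square D_2$ is self-centered. As a consistency check, the diameter value agrees with Proposition~\ref{Diam equality}, since $mdiam(D_1)+mdiam(D_2)=k_1+k_2$. The product is strongly connected because both factors are, so the m-eccentricities are finite.

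\emph{Main point of care.} There is no real obstacle beyond confirming that Proposition~\ref{meccentricity equality} applies exactly as stated. We only need its sufficiency direction, and it requires the two-sided property in just one factor, which is precisely what is assumed.

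If one preferred not to lean on that proposition, there is a direct alternative. Suppose $D_1$ has the two-sided property, and pick $u_j,u_k$ with $\vec d(u_i,u_j)=\vec d(u_k,u_i)=k_1$. Pick $v_s$ with $md(v_r,v_s)=k_2$, say $\vec d(v_r,v_s)=k_2$. Distances in a Cartesian product add coordinatewise, so $\vec d((u_i,v_r),(u_j,v_s))=k_1+k_2$. If instead $\vec d(v_s,v_r)=k_2$, use $(u_k,v_s)$. Combined with Corollary~\ref{cartesian:ecc}, this gives the same equality.
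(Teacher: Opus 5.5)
Your proof is correct and follows the paper's argument essentially verbatim. Like the paper, you set $k_i=mrad(D_i)=mdiam(D_i)$, apply the sufficiency direction of Proposition~\ref{meccentricity equality} to get $mecc_{D_1\square D_2}(u_i,v_r)=k_1+k_2$ at every vertex, and conclude that the product is self-centered; your optional direct argument (additivity of directed distance in $D_1\square D_2$ combined with Corollary~\ref{cartesian:ecc}) is also sound and simply reproves the direction of that proposition you need.
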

\begin{proof}
    Let $D_1$ and $D_2$ be self-centered digraphs with  $mrad(D_i)=k_i=mdiam(D_i)$ for $i=1,2$. 
    Without loss of generality assume that $D_1$ satisfy the two-sided eccentricity property. \\Then by proposition \ref{meccentricity equality}, $mecc_{D_1\square D_2}(u_i,v_r)=mecc_{D_1}(u_i)+mecc_{D_2}(v_r)=k_1+k_2$. 
    Thus $mecc_{D_1\square D_2}(u_i,v_r)=k$, where $k=k_1+k_2$.
    Hence $D_1\square D_2$ is self-centered.
    \qed
\end{proof} 
In general it can be seen that the cartesian product of two self-centered strongly connected may not be self-centered even if $D_1$ and $D_2$ are self-centered, See example \ref{self.centered.eg1}.
\begin{example}\label{self.centered.eg1}
    Cartesian product of two self-centerd strongly connected digraphs need not be self-centered.  
    \begin{figure}[htbp]
\centering
\begin{tikzpicture}[
>=Stealth,
vertex/.style={circle,draw,minimum size=4mm,inner sep=0pt},
scale=.9
]

\node at (-2,3) {$D_1$};

\node[vertex] (u1) at (-2,2) {$u_1$};
\node[vertex] (u2) at (-2,0) {$u_2$};
\node[vertex] (u3) at (-2,-2) {$u_3$};

\draw[<->] (u1)--(u2);
\draw[->] (u2)--(u3);
\draw[->] (u3.west) to[out=120,in=240] (u1.west);

\node at (0,-4) {$D_2$};

\node[vertex] (v1) at (1,-4) {$v_1$};
\node[vertex] (v2) at (3.5,-4) {$v_2$};
\node[vertex] (v3) at (6,-4) {$v_3$};

\draw[<-] (v1)--(v2);
\draw[<->] (v2)--(v3);
\draw[<-,bend left=30] (v3) to (v1);

\node[draw=none] at (3.5,3) {$D_1\square D_2$};

\foreach \i/\x in {1/1,2/3.5,3/6}
{
\node[vertex] (p\i1) at (\x,2) {};
\node[vertex] (p\i2) at (\x,0) {};
\node[vertex] (p\i3) at (\x,-2) {};

\draw[<->] (p\i1)--(p\i2);
\draw[->] (p\i2)--(p\i3);
\draw[->] (p\i3.west) to[out=120,in=240] (p\i1.west);
}

\foreach \j in {1,2,3}
{
\draw[<-] (p1\j)--(p2\j);
\draw[<->] (p2\j)--(p3\j);
\draw[<-,bend left=30] (p3\j) to (p1\j);
}

\end{tikzpicture}
\caption{The digraph $D_1\square D_2$ is not self-centered even though  $D_1$ and $D_2$ are self-centered.}
\label{fig:D1D2}
\end{figure}
\end{example}

\begin{theorem}
    Let $D_1$ and $D_2$ be two strong digraphs with with $n_1,n_2$ vertices and $e_1, e_2$ arcs respectively such that $D_1 \square D_2$ is $k-$self-centered, Then $\xi^C(D_1 \square D_2)= k\big(n_1e_2+e_1n_2\big)$. 
\end{theorem}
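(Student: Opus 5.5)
Since $D_1\square D_2$ is $k$-self-centered, every vertex $(u_i,v_r)$ satisfies $mecc_{D_1\square D_2}(u_i,v_r)=k$. This constant comes out of the sum defining $\xi^C(D_1\square D_2)$, so
\[
\xi^C(D_1\square D_2)=\frac{k}{2}\sum_{(u_i,v_r)\in V(D_1\square D_2)}\bigl(d^+_{(u_i,v_r)}+d^-_{(u_i,v_r)}\bigr).
\]
By the handshake lemma for digraphs, the out-degrees sum to the number of arcs, and so do the in-degrees. The right-hand side is therefore $k$ times the number of arcs of $D_1\square D_2$. Equivalently, one can apply Theorem~\ref{ecc-connectivity-bound}: here $mrad=mdiam=k$, so the lower and upper bounds coincide and force $\xi^C(D_1\square D_2)=k\cdot|A(D_1\square D_2)|$.

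It remains to count the arcs of the Cartesian product. By the definition, every arc of $D_1\square D_2$ has exactly one of two forms. In the first, the $D_1$-coordinate is fixed and the $D_2$-coordinates form an arc of $D_2$; there are $n_1e_2$ of these. In the second, the $D_2$-coordinate is fixed and the $D_1$-coordinates form an arc of $D_1$; there are $e_1n_2$ of these. The two families are disjoint, because an arc changes exactly one coordinate and arcs join distinct vertices, so no loops occur. Hence $|A(D_1\square D_2)|=n_1e_2+e_1n_2$.

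Combining the two steps gives $\xi^C(D_1\square D_2)=k(n_1e_2+e_1n_2)$. The only points needing care are the arc count and the degree convention $d_u=d_u^++d_u^-$ together with the factor $\tfrac12$, which makes the degree sum $2|A|$ and cancels cleanly. I expect no real obstacle.
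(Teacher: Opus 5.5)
Your proposal is correct and follows essentially the same route as the paper. The paper's proof notes that $mrad(D_1\square D_2)=mdiam(D_1\square D_2)=k$ and applies Theorem~\ref{ecc-connectivity-bound} with the arc count $n_1e_2+e_1n_2$; your direct handshake-lemma computation is the same argument unpacked, and your check of the arc count (which the paper assumes without justification) is a sound addition.
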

\begin{proof}
    Since $D_1 \square  D_2$ is self-centered digraph with $n_1e_2+e_1n_2$ arcs, $mrad(D_1 \square D_2)=mdiam(D_1 \square \ D_2)=k$.  
    Then by corollory of Theorem \ref{ecc-connectivity-bound} of \cite{chakooth2025eccentric}, We have 
    $$\xi^C(D_1 \square D_2) =(n_1e_2+e_1n_2)madiam(D_1 \square D_2) =k(n_1e_2+e_1n_2).$$\qed
\end{proof}

Extending the above results for $n$ digraphs we have the following.

\begin{theorem}
Let $D_1,D_2,\ldots,D_n$ be strongly connected digraphs with
$n_i=|V(D_i)|$ and $e_i=|A(D_i)|$, for $1\leq i\leq n$. Then
$$
\xi^C(D_1\square D_2\square\cdots\square D_n)
\leq
\sum_{i=1}^n
\left(\prod_{j\ne i}n_j\right)\xi^C(D_i)
+
\sum_{i=1}^n
\left(
\sum_{k\ne i}e_k
\prod_{\ell\ne i,k}n_\ell
\right)\zeta(D_i).
$$

\end{theorem}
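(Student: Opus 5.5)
We argue by induction on $n$, using Theorem~\ref{thm.ecc.index.cartesian} as the base case $n=2$. (For $n=1$ the inequality reads $\xi^C(D_1)\le\xi^C(D_1)$.) It is cleaner, though, to prove the general statement directly, in exactly the way Theorem~\ref{thm.ecc.index.cartesian} was proved, and to use induction only for the eccentricity estimate.

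\textbf{Step 1: eccentricity bound.} We first extend Corollary~\ref{cartesian:ecc} to
$$mecc_{D_1\square\cdots\square D_n}(x_1,\ldots,x_n)\le\sum_{i=1}^n mecc_{D_i}(x_i).$$
Since the Cartesian product is associative, $D_1\square\cdots\square D_n\cong(D_1\square\cdots\square D_{n-1})\square D_n$. The factor $D_1\square\cdots\square D_{n-1}$ is strongly connected, because a product of strong digraphs is strong: one moves coordinate by coordinate along dipaths. Applying Corollary~\ref{cartesian:ecc} to this pair and then the induction hypothesis gives the bound.

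\textbf{Step 2: degrees.} In the product, the arcs leaving $x=(x_1,\ldots,x_n)$ change exactly one coordinate along an arc of that factor. Hence $d^+_x=\sum_k d^+_{x_k}$ and $d^-_x=\sum_k d^-_{x_k}$, so $d_x=\sum_k d_{x_k}$ with $d=d^++d^-$.

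\textbf{Step 3: expansion.} Substituting both facts into the definition gives
$$\xi^C(D)\le \frac12\sum_{x}\sum_{k}\sum_{i} d_{x_k}\,mecc_{D_i}(x_i).$$
We split the double sum over $(k,i)$ into the diagonal terms $k=i$ and the off-diagonal terms $k\ne i$.

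For $k=i$, summing over the coordinates other than $i$ contributes a factor $\prod_{j\ne i}n_j$, leaving $\frac12\sum_{x_i}d_{x_i}mecc(x_i)=\xi^C(D_i)$.

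For $k\ne i$, the sum factorizes as
$$\Big(\prod_{\ell\ne i,k}n_\ell\Big)\cdot\frac12\sum_{x_k}d_{x_k}\cdot\sum_{x_i}mecc_{D_i}(x_i).$$
Since $\frac12\sum_{x_k}(d^+_{x_k}+d^-_{x_k})=e_k$ by the handshake lemma for digraphs, this equals $e_k\prod_{\ell\ne i,k}n_\ell\,\zeta(D_i)$. Summing over $i$ and $k\ne i$ yields exactly the stated bound.

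The only real obstacle is Step 1, namely justifying the associativity and strong connectivity needed to iterate Corollary~\ref{cartesian:ecc}. Both follow quickly from the definitions, so the rest is bookkeeping. Care is needed with the factor $\frac12$, which must be absorbed into $e_k$ and not into $\zeta(D_i)$.
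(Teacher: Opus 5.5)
Your proposal is correct and follows the paper's proof essentially line for line. Both use degree additivity $d_x=\sum_k d_{x_k}$, the bound $mecc_D\le\sum_i mecc_{D_i}$, and the split into diagonal ($k=i$) and off-diagonal ($k\neq i$) terms. The one addition is yours: you justify the $n$-fold eccentricity bound by iterating Corollary~\ref{cartesian:ecc}, using associativity and strong connectivity of the partial product, whereas the paper simply asserts it.
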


\begin{proof}
Let $D=D_1\square D_2\square\cdots\square D_n$. By definition,

$$
\xi^C(D)
=
\frac{1}{2}
\sum_{(v_1,\ldots,v_n)\in V(D)}
d_D(v_1,\ldots,v_n)\,
mecc_D(v_1,\ldots,v_n).
$$

Since
$mecc_D(v_1,\ldots,v_n)
\leq
\sum_{i=1}^n mecc_{D_i}(v_i)
$
and
$
d_D(v_1,\ldots,v_n)
=
\sum_{k=1}^n d_{D_k}(v_k),
$

we have

$$
\xi^C(D)
\leq
\frac{1}{2}
\sum_{(v_1,\ldots,v_n)\in V(D)}
\left(\sum_{k=1}^n d_{D_k}(v_k)\right)
\left(\sum_{i=1}^n mecc_{D_i}(v_i)\right).
$$

Separating the terms corresponding to $k=i$ and $k\ne i$, we obtain

$$
\begin{aligned}
\xi^C(D)\leq {}&
\frac{1}{2}\sum_{i=1}^n
\sum_{(v_1,\ldots,v_n)\in V(D)}
d_{D_i}(v_i)\,mecc_{D_i}(v_i)\\
&+
\frac{1}{2}\sum_{i=1}^n\sum_{k\ne i}
\sum_{(v_1,\ldots,v_n)\in V(D)}
d_{D_k}(v_k)\,mecc_{D_i}(v_i).
\end{aligned}
$$

Now,

$$
\frac{1}{2}
\sum_{(v_1,\ldots,v_n)\in V(D)}
d_{D_i}(v_i)\,mecc_{D_i}(v_i)
=
\left(\prod_{j\ne i}n_j\right)\xi^C(D_i),
$$

while, for $k\ne i$,

$$
\begin{aligned}
&\frac{1}{2}
\sum_{(v_1,\ldots,v_n)\in V(D)}
d_{D_k}(v_k)\,mecc_{D_i}(v_i)\\
&\quad =
\frac{1}{2}
\left(\prod_{\ell\ne i,k}n_\ell\right)
\left(\sum_{v_k\in V(D_k)}d_{D_k}(v_k)\right)
\left(\sum_{v_i\in V(D_i)}mecc_{D_i}(v_i)\right)\\
&\quad =
e_k\left(\prod_{\ell\ne i,k}n_\ell\right)\zeta(D_i),
\end{aligned}
$$

where we have used

$
\sum_{v_k\in V(D_k)}d_{D_k}(v_k)=2e_k
$ and $
\zeta(D_i)=\sum_{v_i\in V(D_i)}mecc_{D_i}(v_i).
$\\

Therefore,

$$
\xi^C(D)
\leq
\sum_{i=1}^n
\left(\prod_{j\ne i}n_j\right)\xi^C(D_i)
+
\sum_{i=1}^n
\left(
\sum_{k\ne i}e_k
\prod_{\ell\ne i,k}n_\ell
\right)\zeta(D_i).
$$
\qed
\end{proof}

\section{Eccentric Connectivity Index of Strong Product Digraphs}
In this section, the  eccentric connectivity index of the strong product of strongly connected digraphs is studied. First recall some definitions 
from \cite{hammack2011handbook}.
\begin{definition}
The strong product $D_1 \boxtimes D_2$ of two digraphs $D_1$ and $D_2$ with vertex sets $V(D_1) = \{u_1,u_2,\ldots,u_m\}$ and $V(D_2) = \{v_1,v_2,\ldots,v_n\}$ is the digraph having the vertex set $V(D_1) \times V(D_2)$ and arc set $A(D_1 \boxtimes D_2)$ defined as follows.

There is an edge from the vertex $(u_i,v_r)$ to $(u_j,v_s)$ in $D_1 \boxtimes D_2$ if either

1. $(u_i,u_j) \in A(D_1)$, $v_r = v_s$, or

2. $u_i = u_j$, $(v_r,v_s) \in A(D_2)$, or

3. $(u_i,u_j) \in A(D_1)$, $(v_r,v_s) \in A(D_2)$.\\
\end{definition}
\begin{definition}
The strong product of $n$ digraphs $D_1,\ldots,D_n$ is the digraph $D = D_1 \boxtimes D_2 \boxtimes \cdots \boxtimes D_n$ with vertex set $V(D) = \{(x_1,x_2,\ldots,x_n) \mid x_i \in V(D_i)\}$ and there is an edge from a vertex $(x_1,x_2,\ldots,x_n)$ to the vertex $(y_1,y_2,\ldots,y_n)$ provided 
$x_i = y_i$ or $(x_i,y_i) \in A(D_i)$ for all $i \in [n]$ and $(x_i,y_i) \in A(D_i)$ for at least one $i \in [n]$.
\end{definition}

The distance between two vertices $(g,h)$ and $(g',h')$ in the strong product $G \boxtimes H$ of two graphs $G$ and $H$ is
$d_{G \boxtimes H}((g,h),(g',h')) = \max\{d_G(g,g'),d_H(h,h')\}$.
From \cite{tj2022study} we have the following.

\begin{lemma}[Lemma 5.2.1 of \cite{tj2022study}]
Let $D_1$ and $D_2$ be two strongly connected digraphs. Then
$$md_{D_1 \boxtimes D_2} ((u_i,v_r),(u_j ,v_s)) = \max \{ md_{D_1} (u_i,u_j ),md_{D_2} (v_r,v_s)\}$$
$$mecc_{D_1 \boxtimes D_2} (u_i,v_r) = \max \{ mecc_{D_1} (u_i),mecc_{D_2} (v_r)\}.$$
\end{lemma}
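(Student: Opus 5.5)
The plan is to prove the distance formula first and then read off the eccentricity formula. Write $a=md_{D_1}(u_i,u_j)$ and $b=md_{D_2}(v_r,v_s)$; we show that $md_{D_1\boxtimes D_2}$ between the two vertices equals $\max\{a,b\}$. The key intermediate statement is the analogous identity for directed distance:
$$\vec d_{D_1\boxtimes D_2}\bigl((u_i,v_r),(u_j,v_s)\bigr)=\max\{\vec d_{D_1}(u_i,u_j),\vec d_{D_2}(v_r,v_s)\}.$$
Once this holds for both orders of the pair, we take the maximum of the two directions. Since $\max$ commutes with $\max$, this gives
$$md_{D_1\boxtimes D_2}=\max\{\vec d_1(u_i,u_j),\vec d_1(u_j,u_i),\vec d_2(v_r,v_s),\vec d_2(v_s,v_r)\}=\max\{a,b\}.$$

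For the directed identity we prove two inequalities. For the lower bound, take any directed walk in $D_1\boxtimes D_2$ from $(u_i,v_r)$ to $(u_j,v_s)$. Each arc either fixes the first coordinate or moves it along an arc of $D_1$. Projecting onto the first coordinate and deleting the repeated vertices therefore gives a directed walk in $D_1$ from $u_i$ to $u_j$ that is no longer than the original. Hence the length is at least $\vec d_1(u_i,u_j)$, and symmetrically at least $\vec d_2(v_r,v_s)$.

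For the upper bound, let $p=\vec d_1(u_i,u_j)\le q=\vec d_2(v_r,v_s)$, and take geodesics $u_i=x_0\to\cdots\to x_p=u_j$ in $D_1$ and $v_r=y_0\to\cdots\to y_q=v_s$ in $D_2$. The sequence $(x_t,y_t)$ for $t\le p$, followed by $(u_j,y_t)$ for $p<t\le q$, is a directed walk in $D_1\boxtimes D_2$. Its first $p$ steps are arcs of type 3 and the remaining steps are arcs of type 2, so its length is $q=\max\{p,q\}$. The case $p>q$ is symmetric and uses type 3 arcs followed by type 1 arcs. The degenerate cases $p=0$ or $q=0$ need a brief check: there we simply use arcs of a single type. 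This is the step where the definition of the strong product is genuinely used, and I expect it to be the main, though mild, obstacle; the care needed is just in the case split.

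For the eccentricity formula, maximize over all $(u_j,v_s)\in V(D_1\boxtimes D_2)$:
$$mecc_{D_1\boxtimes D_2}(u_i,v_r)=\max_{u_j,v_s}\max\{md_1(u_i,u_j),md_2(v_r,v_s)\}.$$
The two coordinates vary independently, so this equals $\max\{\max_{u_j}md_1(u_i,u_j),\max_{v_s}md_2(v_r,v_s)\}=\max\{mecc_{D_1}(u_i),mecc_{D_2}(v_r)\}$. Concretely, choosing $u_j$ to be an $md$-eccentric vertex of $u_i$ and $v_s$ arbitrary attains $mecc_{D_1}(u_i)$, and the other term is attained in the same way. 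This completes the plan.
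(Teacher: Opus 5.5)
Your proof is correct and complete. The paper gives no proof of its own: it imports the lemma from \cite{tj2022study}, prefaced by the undirected formula $d_{G\boxtimes H}=\max\{d_G,d_H\}$. Your route is the standard argument behind it: prove the directed analogue $\vec d_{D_1\boxtimes D_2}=\max\{\vec d_{D_1},\vec d_{D_2}\}$ by the projection lower bound and the diagonal-then-axis walk upper bound, then maximize over both directions and over the independent coordinates. One cosmetic point: to attain $mecc_{D_1}(u_i)$ exactly you should take $v_s=v_r$ rather than an arbitrary $v_s$, though your max-over-a-product step already settles the equality.
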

\begin{corollary}[Corollary 5.2.2 of \cite{tj2022study}]
    Let $D_1$ and $D_2$ be two strongly connected digraphs. Then
$$mrad(D_1 \boxtimes D_2) = \max \{mrad(D_1),mrad(D_2)\},$$
$$mdiam(D_1 \boxtimes D_2) = \max \{mdiam(D_1),mdiam(D_2)\}.$$
\end{corollary}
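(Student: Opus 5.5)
The statement is Corollary 5.2.2: $mrad(D_1\boxtimes D_2)=\max\{mrad(D_1),mrad(D_2)\}$ and $mdiam(D_1\boxtimes D_2)=\max\{mdiam(D_1),mdiam(D_2)\}$. I will derive both from the second identity of Lemma 5.2.1, namely
\[
mecc_{D_1\boxtimes D_2}(u_i,v_r)=\max\{mecc_{D_1}(u_i),mecc_{D_2}(v_r)\}.
\]
Once this is used, the problem is purely about taking $\min$ and $\max$ of the function $f(a,b)=\max\{a,b\}$ over a product set. Here $a$ ranges over the m-eccentricities in $D_1$ and $b$ ranges over those in $D_2$.

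\emph{Diameter.} Maximizing over all $(u_i,v_r)$ gives $mdiam(D_1\boxtimes D_2)=\max_{u_i,v_r}\max\{mecc_{D_1}(u_i),mecc_{D_2}(v_r)\}$. Two maxima commute, so this equals $\max\{\max_{u_i}mecc_{D_1}(u_i),\max_{v_r}mecc_{D_2}(v_r)\}$, which is $\max\{mdiam(D_1),mdiam(D_2)\}$.

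\emph{Radius: lower bound.} For every vertex $(u_i,v_r)$ we have $\max\{mecc_{D_1}(u_i),mecc_{D_2}(v_r)\}\ge mecc_{D_1}(u_i)\ge mrad(D_1)$. In the same way it is at least $mrad(D_2)$. Taking the minimum over vertices gives $mrad(D_1\boxtimes D_2)\ge\max\{mrad(D_1),mrad(D_2)\}$.

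\emph{Radius: upper bound.} Choose a central vertex $u^*$ of $D_1$ with $mecc_{D_1}(u^*)=mrad(D_1)$ and a central vertex $v^*$ of $D_2$ with $mecc_{D_2}(v^*)=mrad(D_2)$. The vertex $(u^*,v^*)$ then has m-eccentricity exactly $\max\{mrad(D_1),mrad(D_2)\}$, so the reverse inequality holds.

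The only step needing care is the radius, since $\min$ does not commute with $\max$ in general. The key point is that the minimum of $\max\{a,b\}$ over a \emph{product} set is attained by minimizing each coordinate independently. This works because $\max$ is monotone in each argument and the two coordinates can be chosen freely, which is exactly the choice of $(u^*,v^*)$ above.
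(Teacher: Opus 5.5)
Your proposal is correct and follows the intended route. The paper quotes this corollary without proof, as an immediate consequence of Lemma 5.2.1. Your argument derives it from $mecc_{D_1\boxtimes D_2}(u_i,v_r)=\max\{mecc_{D_1}(u_i),mecc_{D_2}(v_r)\}$: you commute the maxima for $mdiam$, and for $mrad$ you pair the bound $\max\{a,b\}\ge a,b$ with the central pair $(u^*,v^*)$ that attains it.
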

We now study the metric eccentric connectivity index, defined with respect to the metric maximum distance, for strong products of strongly connected digraphs.
The following lemma, together with Theorem~\ref{ecc-connectivity-bound}, immediately yields Theorem~\ref{thm.eci.strong.bdd1}.

\begin{lemma}\label{strong.self-centered}
Let $D_1$ and $D_2$ be strongly connected digraphs. Then the strong product $D_1\boxtimes D_2$ is self-centered if and only if, for some $i\in\{1,2\}$, $D_i$ is self-centered and
$mdiam(D_j)\leq mrad(D_i)$
where $j\in\{1,2\}$ and $j\neq i$.
\end{lemma}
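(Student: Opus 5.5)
The plan is to work entirely through the eccentricity formula of Lemma 5.2.1 of \cite{tj2022study}, namely $mecc_{D_1\boxtimes D_2}(u_i,v_r)=\max\{mecc_{D_1}(u_i),mecc_{D_2}(v_r)\}$. This reduces self-centeredness of the product to a purely combinatorial statement about the two eccentricity functions. The product is self-centered exactly when $mrad(D_1\boxtimes D_2)=mdiam(D_1\boxtimes D_2)$. By Corollary 5.2.2 of \cite{tj2022study} this reads
$$\max\{mrad(D_1),mrad(D_2)\}=\max\{mdiam(D_1),mdiam(D_2)\}.$$
Write $r_i=mrad(D_i)$ and $d_i=mdiam(D_i)$, with $r_i\le d_i$. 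The whole proof is then an elementary argument about these four numbers.

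For sufficiency, assume without loss of generality that $D_1$ is self-centered, so $r_1=d_1$, and that $d_2\le r_1$. Then
$$\max\{r_1,r_2\}=r_1=d_1=\max\{d_1,d_2\},$$
using $r_2\le d_2\le r_1$ and $d_2\le r_1=d_1$. Hence the product is self-centered. One can also see this directly: every vertex $(u_i,v_r)$ has eccentricity $\max\{r_1,mecc_{D_2}(v_r)\}=r_1$, since $mecc_{D_2}(v_r)\le d_2\le r_1$.

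For necessity, suppose $\max\{r_1,r_2\}=\max\{d_1,d_2\}=:M$. Choose the index $i$ with $d_i=M$; if both diameters equal $M$, pick either one, and let $j\neq i$. Two cases arise.

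\emph{Case $r_i=M$.} Then $r_i=d_i$, so $D_i$ is self-centered, and $d_j\le M=r_i$, as required.

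\emph{Case $r_i<M$.} Then $r_j=M$, which forces $d_j\ge r_j=M$. Since $M$ is the maximum diameter, $d_j=M=r_j$, so $D_j$ is self-centered. Moreover $d_i=M=r_j$, so $d_i\le r_j$. Thus the condition holds with the roles of $i$ and $j$ swapped.

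The only step needing care is this case split in the necessity direction. The index $i$ in the conclusion need not be the one whose diameter attains the maximum, and ties must be handled. The choice above, fixing $i$ by the diameter and then swapping roles if $r_i<M$, handles both. Everything else is a direct substitution of the quoted lemma and corollary.
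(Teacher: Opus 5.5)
Your proof is correct. It differs from the paper's in which fact it leans on. The paper uses only the pointwise formula of Lemma 5.2.1. For sufficiency it checks that every $(u,v)$ has $\max\{mecc_{D_1}(u),mecc_{D_2}(v)\}=mrad(D_i)$, which is exactly your ``direct'' remark. For necessity it argues from $\max\{mecc_{D_1}(u),mecc_{D_2}(v)\}=k$ for all $u,v$ that some factor has constant eccentricity $k$. You instead use Corollary 5.2.2 to reduce self-centeredness of $D_1\boxtimes D_2$ to $\max\{r_1,r_2\}=\max\{d_1,d_2\}$, and then argue about four numbers.

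Your route is more rigorous at the one point where the paper is terse. The paper asserts without justification that some factor has all eccentricities equal to $k$. The missing reason is that if both factors had a vertex of eccentricity below $k$, the corresponding product vertex would too. That is precisely the content of $mrad(D_1\boxtimes D_2)=\max\{r_1,r_2\}$, which you invoke explicitly. The paper's route, in turn, needs only the pointwise formula and exhibits the common eccentricity $mrad(D_i)$ of every product vertex directly.

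Your case split in the necessity direction is unnecessary. Choose $i$ with $r_i=\max\{r_1,r_2\}=M$, rather than by the diameter. Then $M=r_i\le d_i\le M$ shows $D_i$ is self-centered, and $d_j\le M=r_i$ follows at once.
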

\begin{proof}

Suppose, for some $i\in \{1,2\}$, $D_i$ is self-centered and $mdiam(D_j)\leq mrad(D_i)$, where $j\neq i$. Then $mecc_{D_i}(u)=mrad(D_i)$ for every $u\in V(D_i)$ and $mecc_{D_j}(v)\leq mdiam(D_j)\leq mrad(D_i)$ for every $v\in V(D_j)$. \\
Hence $mecc_{D_1\boxtimes D_2}(u,v)=mrad(D_i)$ for every $(u,v)\in V(D_1\boxtimes D_2)$, and therefore $D_1\boxtimes D_2$ is self-centered.

Conversely, suppose $D_1\boxtimes D_2$ is self-centered with $mecc(u_i,v_r)=k$. Then $\max\{mecc_{D_1}(u),mecc_{D_2}(v)\}=k$, for all $u\in V(D_1)$ and $v\in V(D_2)$. Thus, at least one factor, say $D_i$, has $mecc_{D_i}(u)=K$ for every $u\in V(D_i)$, and hence $D_i$ is self-centered. Also, $mecc_{D_j}(v)\leq k=mrad(D_i)$ for every $v\in V(D_j)$, and consequently $mdiam(D_j)\leq mrad(D_i)$.\qed
\end{proof}

\begin{example}
Strong product of two strongly connected digraphs need not be self-centered even if one of them is self-centered.
\begin{figure}[h]
\centering
\begin{tikzpicture}[
>=Stealth,
vertex/.style={circle,draw,minimum size=4mm,inner sep=0pt},
scale=0.92
]

\node at (-2,3.2) {$D_1$};
\node[vertex] (u1) at (-2,2) {$u_1$};
\node[vertex] (u2) at (-2,-0.5) {$u_2$};

\draw[<->] (u1)--(u2);

\node at (1,-4) {$D_2$};

\node[vertex] (v1) at (0,-3) {$v_1$};
\node[vertex] (v2) at (2,-3) {$v_2$};
\node[vertex] (v3) at (4,-3) {$v_3$};
\node[vertex] (v4) at (6,-3) {$v_4$};
\node[vertex] (v5) at (8,-3) {$v_5$};

\draw[->]  (v1)--(v2);
\draw[<->,bend left=33] (v2) to (v5);
\draw[->,bend left=26] (v3) to (v5);
\draw[->]  (v4)--(v3);
\draw[<->] (v4)--(v5);
\draw[->,bend left=25] (v5) to (v1);

\node at (4,3.5) {$D_1\boxtimes D_2$};


\node[vertex] (p11) at (0,2) {};
\node[vertex] (p12) at (2,2) {};
\node[vertex] (p13) at (4,2) {};
\node[vertex] (p14) at (6,2) {};
\node[vertex] (p15) at (8,2) {};

\node[vertex] (p21) at (0,-0.5) {};
\node[vertex] (p22) at (2,-0.5) {};
\node[vertex] (p23) at (4,-0.5) {};
\node[vertex] (p24) at (6,-0.5) {};
\node[vertex] (p25) at (8,-0.5) {};

\draw[<->] (p11)--(p21);
\draw[<->] (p12)--(p22);
\draw[<->] (p13)--(p23);
\draw[<->] (p14)--(p24);
\draw[<->] (p15)--(p25);

\draw[->] (p11)--(p12);

\draw[<->,bend left=33] (p12) to (p15);

\draw[->,bend left=26] (p13) to (p15);

\draw[->] (p14)--(p13);

\draw[<->] (p14)--(p15);

\draw[->,bend left=25] (p15) to (p11);

\draw[->] (p21)--(p22);

\draw[<->,bend left=33] (p22) to (p25);

\draw[->,bend left=26] (p23) to (p25);

\draw[->] (p24)--(p23);

\draw[<->] (p24)--(p25);

\draw[->,bend left=25] (p25) to (p21);

\draw[->,dashed] (p11) to (p22);
\draw[->,dashed] (p21) to (p12);

\draw[<->,dashed,bend left=33] (p12) to (p25);
\draw[<->,dashed,bend left=33] (p22) to (p15);

\draw[->,dashed,bend left=26] (p13) to (p25);
\draw[->,dashed,bend left=26] (p23) to (p15);

\draw[->,dashed] (p14) to (p23);
\draw[->,dashed] (p24) to (p13);

\draw[<->,dashed] (p14) to (p25);
\draw[<->,dashed] (p24) to (p15);

\draw[->,dashed,bend left=25] (p15) to (p21);
\draw[->,dashed,bend left=25] (p25) to (p11);

\end{tikzpicture}

\caption{The digraphs $D_1$ is self-centered but the strong product
$D_1\boxtimes D_2$ is  not self-centered.}

\end{figure}
\end{example}
As in the case of Cartesian product, Applying the theorem ~\ref{ecc-connectivity-bound}, immediately yeilds Theorem \ref{thm.eci.strong.bdd1}.
\begin{theorem}\label{thm.eci.strong.bdd1}
     Let $D_1$ and $D_2$ be two strongly connected digraphs with $e_1,e_2$ arcs respectively. Then $(n_2e_1+n_1e_2+e_1e_2) \max\{mrad(D_1), mrad(D_2)\}\le\xi^C(D_1 \boxtimes D_2) \le (n_2e_1+n_1e_2+e_1e_2) \max\{mdiam(D_1), mdiam(D_2)\}$.
 \end{theorem}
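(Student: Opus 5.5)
The plan is to apply Theorem~\ref{ecc-connectivity-bound} directly to the digraph $D=D_1\boxtimes D_2$, which is strongly connected by Lemma 5.2.1 since all m-distances are finite. For this we need two ingredients: the number of arcs of $D$, and its m-radius and m-diameter. The latter are given by Corollary 5.2.2:
$$mrad(D_1\boxtimes D_2)=\max\{mrad(D_1),mrad(D_2)\},\qquad mdiam(D_1\boxtimes D_2)=\max\{mdiam(D_1),mdiam(D_2)\}.$$

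The main counting step is to show $|A(D_1\boxtimes D_2)|=n_2e_1+n_1e_2+e_1e_2$, where $n_1,n_2$ are the orders of $D_1,D_2$. I will split the arcs according to the three (mutually exclusive) clauses in the definition of the strong product.
\begin{itemize}
\item Arcs of type 1 are determined by an arc $(u_i,u_j)\in A(D_1)$ together with a vertex $v_r=v_s\in V(D_2)$; there are $e_1n_2$ of them.
\item Symmetrically, arcs of type 2 number $n_1e_2$.
\item Arcs of type 3 correspond bijectively to pairs consisting of an arc of $D_1$ and an arc of $D_2$, giving $e_1e_2$.
\end{itemize}
The three types are disjoint. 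An arc changes either only the first coordinate, or only the second, or both, and no loops arise because $D_1,D_2$ have no loops. So there are no parallel arcs and the count is exact.

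Substituting $a=n_2e_1+n_1e_2+e_1e_2$, together with the expressions for $mrad$ and $mdiam$, into $a\cdot mrad(D)\le\xi^C(D)\le a\cdot mdiam(D)$ gives both inequalities of the statement. The only point needing care is the disjointness in the arc count. I do not expect a real obstacle here, since the result is essentially a corollary of the earlier results.
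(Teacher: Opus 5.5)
Your proposal is correct and follows the paper's route. You apply Theorem~\ref{ecc-connectivity-bound} to $D_1\boxtimes D_2$, use Corollary 5.2.2 for its $mrad$ and $mdiam$, and take the arc count to be $n_2e_1+n_1e_2+e_1e_2$; your explicit disjointness check for the three arc types fills in a step the paper leaves implicit.
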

 Equality of the above theorem holds if and only if $D_i$, $i\in{1,2}$, is self-centered and $
mdiam(D_j)\leq mrad(D_i)$
where $j\in{1,2}$ and $j\neq i$ is obvious from theorem \ref{ecc-connectivity-bound} and from lemma \ref{strong.self-centered}.
 
\begin{theorem}\label{thm.eci.strong.bdd2}
Let $D_1$ and $D_2$ be two strongly connected digraphs with and $e_1$ and $e_2$ arcs respectively. Then\\
$\begin{aligned}
\xi^C(D_1\boxtimes D_2)
&<\,(n_2+e_2)\xi^C(D_1)
+(n_1+e_1)\xi^C(D_2)+e_2\zeta(D_1)
+e_1\zeta(D_2)\\
&-\frac{1}{2}\sum_{i=1}^{n_1}\sum_{r=1}^{n_2}
d_D(u_i,v_r)\min\{\alpha_i,\beta_r\}.
\end{aligned}
$

\end{theorem}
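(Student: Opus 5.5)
The plan is to read $\alpha_i=mecc_{D_1}(u_i)$ and $\beta_r=mecc_{D_2}(v_r)$; these symbols are not defined before the statement, so this is my assumption about what the paper intends. The key tool is the elementary identity $\max\{a,b\}=a+b-\min\{a,b\}$. Combined with the eccentricity formula for the strong product (Lemma 5.2.1 of \cite{tj2022study}), it gives, for every vertex $(u_i,v_r)$ of $D=D_1\boxtimes D_2$,
$$mecc_{D}(u_i,v_r)=\alpha_i+\beta_r-\min\{\alpha_i,\beta_r\}.$$
Substituting this into the definition of $\xi^C(D)$ splits the index into a main part $\frac12\sum_{i,r} d_D(u_i,v_r)(\alpha_i+\beta_r)$ and the correction $-\frac12\sum_{i,r} d_D(u_i,v_r)\min\{\alpha_i,\beta_r\}$. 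The correction is exactly the last term of the claimed bound, so everything reduces to evaluating the main part.

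Next I would compute the degrees in $D$. From the three arc types in the definition of the strong product,
$$d^+_D(u_i,v_r)=d^+_{u_i}+d^+_{v_r}+d^+_{u_i}d^+_{v_r},\qquad d^-_D(u_i,v_r)=d^-_{u_i}+d^-_{v_r}+d^-_{u_i}d^-_{v_r},$$
so the total degree is $d_D=d_{u_i}+d_{v_r}+d^+_{u_i}d^+_{v_r}+d^-_{u_i}d^-_{v_r}$. I would then expand $\frac12\sum d_D(\alpha_i+\beta_r)$ term by term, exactly as in the proof of Theorem~\ref{thm.ecc.index.cartesian}, using $\sum_v d^+_v=\sum_v d^-_v=e$ and $\sum_v d_v=2e$.

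The linear degree terms give $n_2\xi^C(D_1)+n_1\xi^C(D_2)+e_2\zeta(D_1)+e_1\zeta(D_2)$. For the product terms paired with $\alpha_i$, summing over $r$ first turns $\frac12\sum_{i,r}(d^+_{u_i}d^+_{v_r}+d^-_{u_i}d^-_{v_r})\alpha_i$ into $\frac12\sum_i e_2(d^+_{u_i}+d^-_{u_i})\alpha_i=e_2\xi^C(D_1)$. By symmetry, the product terms paired with $\beta_r$ give $e_1\xi^C(D_2)$. Collecting these yields the coefficients $(n_2+e_2)$ and $(n_1+e_1)$ in the statement.

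The main obstacle is the strict inequality. With $\alpha_i,\beta_r$ read as above, the computation appears to give an \emph{equality}, not a strict bound. I would therefore first check which definition of $\alpha_i,\beta_r$ (or which degree estimate) the authors actually use. If the authors bound the product-degree term more crudely, say by $d^+_{u_i}d^+_{v_r}+d^-_{u_i}d^-_{v_r}<d_{u_i}d_{v_r}$ (strict since every vertex of a strong digraph on at least two vertices has positive in- and out-degree), strictness would come from there, but the coefficients would then change. Otherwise the honest conclusion of the plan is the identity with ``$=$'', which of course implies ``$\le$''. In that case the strict version would hold only under an extra hypothesis that creates slack.
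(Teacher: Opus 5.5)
Your proposal follows the paper's proof step for step: the same $\alpha_i=mecc_{D_1}(u_i)$ and $\beta_r=mecc_{D_2}(v_r)$, the identity $\max\{\alpha_i,\beta_r\}=\alpha_i+\beta_r-\min\{\alpha_i,\beta_r\}$, the degree formula $d_D(u_i,v_r)=d_{u_i}+d_{v_r}+d^+_{u_i}d^+_{v_r}+d^-_{u_i}d^-_{v_r}$, and the same term-by-term summation. The paper's proof likewise ends with ``$=$'', so the ``$<$'' in the statement is a misprint and your diagnosis is right. One correction: since the identity holds exactly for every pair of strong digraphs, the strict inequality with this right-hand side is always false, and no extra hypothesis can rescue it. What is genuinely strict is the bound with the $\min$-term dropped, for nontrivial factors, where $\min\{\alpha_i,\beta_r\}\ge 1$.
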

\begin{proof}
Let $D=D_1\boxtimes D_2$, where $
V(D_1)=\{u_1,u_2,\ldots,u_{n_1}\}
\quad\text{and}\quad
V(D_2)=\{v_1,v_2,\ldots,v_{n_2}\}.
$
Put
$\alpha_i=mecc_{D_1}(u_i)
\quad\text{and}\quad
\beta_r=mecc_{D_2}(v_r).
$ So, $\alpha_i \ge1$ and $\beta_r\ge1$.  Since
$
mecc_D(u_i,v_r)=\max\{\alpha_i,\beta_r\},
$
and $
\max\{\alpha_i,\beta_r\}
=
\alpha_i+\beta_r-\min\{\alpha_i,\beta_r\},
$
we have
$$
\begin{aligned}
\xi^C(D_1\boxtimes D_2)
&=\frac{1}{2}\sum_{i=1}^{n_1}\sum_{r=1}^{n_2}
d_D(u_i,v_r)(\alpha_i+\beta_r)\\
&-\frac{1}{2}\sum_{i=1}^{n_1}\sum_{r=1}^{n_2}
d_D(u_i,v_r)\min\{\alpha_i,\beta_r\}.
\end{aligned}
$$
The degree $d_D(u_i,v_r)=
d_{D_1}(u_i)+d_{D_2}(v_r)
+d_{D_1}^{+}(u_i)d_{D_2}^{+}(v_r)
+d_{D_1}^{-}(u_i)d_{D_2}^{-}(v_r)$ of the vertex $(u_i,v_r)$ in $D_1\boxtimes D_2$ can be deduced from  \cite{bovzovic2020efficient}. 

Now,

$$
\begin{aligned}
\frac{1}{2}\sum_{i=1}^{n_1}\sum_{r=1}^{n_2}
d_D(u_i,v_r)\alpha_i
&=
\frac{1}{2}\sum_{i=1}^{n_1}\sum_{r=1}^{n_2}
d_{D_1}(u_i)\alpha_i\\
&+\frac{1}{2}\sum_{i=1}^{n_1}\sum_{r=1}^{n_2}
d_{D_2}(v_r)\alpha_i\\
&+\frac{1}{2}\sum_{i=1}^{n_1}\sum_{r=1}^{n_2}
d_{D_1}^{+}(u_i)d_{D_2}^{+}(v_r)\alpha_i\\
&+\frac{1}{2}\sum_{i=1}^{n_1}\sum_{r=1}^{n_2}
d_{D_1}^{-}(u_i)d_{D_2}^{-}(v_r)\alpha_i.
\end{aligned}
$$

Using
$\sum_{r=1}^{n_2}d_{D_2}(v_r)=2e_2,
\qquad
\sum_{r=1}^{n_2}d_{D_2}^{+}(v_r)
=
\sum_{r=1}^{n_2}d_{D_2}^{-}(v_r)=e_2,$

we obtain

$$
\begin{aligned}
\frac{1}{2}\sum_{i=1}^{n_1}\sum_{r=1}^{n_2}
d_D(u_i,v_r)\alpha_i
&=
\frac{n_2}{2}\sum_{i=1}^{n_1}d_{D_1}(u_i)\alpha_i
+e_2\sum_{i=1}^{n_1}\alpha_i\\
&+
\frac{e_2}{2}\sum_{i=1}^{n_1}
\bigl[d_{D_1}^{+}(u_i)+d_{D_1}^{-}(u_i)\bigr]\alpha_i\\
&=
(n_2+e_2)\xi^C(D_1)+e_2\zeta(D_1).
\end{aligned}
$$

Similarly,

$$
\frac{1}{2}\sum_{i=1}^{n_1}\sum_{r=1}^{n_2}
d_D(u_i,v_r)\beta_r
=
(n_1+e_1)\xi^C(D_2)+e_1\zeta(D_2).
$$

Consequently,
$$
\begin{aligned}
\xi^C(D_1\boxtimes D_2)
&=(n_2+e_2)\xi^C(D_1)
+(n_1+e_1)\xi^C(D_2)+e_2\zeta(D_1)+e_1\zeta(D_2)\\
&-\frac{1}{2}\sum_{i=1}^{n_1}\sum_{r=1}^{n_2}
d_D(u_i,v_r)\min\{\alpha_i,\beta_r\}.
\end{aligned}
$$\qed
\end{proof}

\begin{theorem}
     If $D_1$ and $D_2$ are two strongly connected digraphs with $e_1,e_2$ arcs respectively such that $mecc(u_i)\le mecc(v_r)$ for $i=1,2,\cdots,n_1,r=1,2,\cdots,n_2, $ then $$\xi^C(D_1\boxtimes D_2)
=\big(n_1+e_1\big) \xi^C(D_2)
+e_1\zeta(D_2).$$
 \end{theorem}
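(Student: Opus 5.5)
The plan is to evaluate $\xi^C(D_1\boxtimes D_2)$ directly. The hypothesis turns the $\max$ in Lemma 5.2.1 of \cite{tj2022study} into a single factor's eccentricity, after which only one of the two degree-weighted sums from the proof of Theorem~\ref{thm.eci.strong.bdd2} survives. As there, write $\alpha_i=mecc_{D_1}(u_i)$ and $\beta_r=mecc_{D_2}(v_r)$. The hypothesis $\alpha_i\le\beta_r$ for all $i,r$ gives
\[
mecc_{D_1\boxtimes D_2}(u_i,v_r)=\max\{\alpha_i,\beta_r\}=\beta_r
\]
for every vertex $(u_i,v_r)$. Substituting into the definition,
\[
\xi^C(D_1\boxtimes D_2)=\frac{1}{2}\sum_{i=1}^{n_1}\sum_{r=1}^{n_2} d_D(u_i,v_r)\,\beta_r .
\]

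Next I would use the degree formula quoted in the proof of Theorem~\ref{thm.eci.strong.bdd2}:
\[
d_D(u_i,v_r)=d_{D_1}(u_i)+d_{D_2}(v_r)+d^+_{D_1}(u_i)d^+_{D_2}(v_r)+d^-_{D_1}(u_i)d^-_{D_2}(v_r).
\]
This splits the double sum into four pieces, and in each piece I sum over $i$ first, since $\beta_r$ does not depend on $i$. The four pieces evaluate as follows.
\begin{itemize}
\item From $\sum_i d_{D_1}(u_i)=2e_1$, the first piece is $\frac{1}{2}\cdot 2e_1\sum_r\beta_r=e_1\zeta(D_2)$.
\item The second piece is $\frac{n_1}{2}\sum_r d_{D_2}(v_r)\beta_r=n_1\xi^C(D_2)$.
\item Using $\sum_i d^+_{D_1}(u_i)=\sum_i d^-_{D_1}(u_i)=e_1$, the last two pieces combine to $\frac{e_1}{2}\sum_r\bigl(d^+_{D_2}(v_r)+d^-_{D_2}(v_r)\bigr)\beta_r=e_1\xi^C(D_2)$.
\end{itemize}
Adding these gives $(n_1+e_1)\xi^C(D_2)+e_1\zeta(D_2)$, which is exactly the ``similarly'' identity already stated in the proof of Theorem~\ref{thm.eci.strong.bdd2}.

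Alternatively, I could start from the identity in Theorem~\ref{thm.eci.strong.bdd2}. There $\min\{\alpha_i,\beta_r\}=\alpha_i$, so the subtracted term equals $\frac{1}{2}\sum d_D\alpha_i=(n_2+e_2)\xi^C(D_1)+e_2\zeta(D_1)$, and this cancels the $D_1$-terms.

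The only real point needing care is the degree formula, in particular that arcs of type 3 contribute $d^+d^+ + d^-d^-$ and not $d\cdot d$. I would justify it by noting that an out-arc of type 3 at $(u_i,v_r)$ pairs an out-arc of $u_i$ with an out-arc of $v_r$, and an in-arc pairs in-arcs. Everything else is routine bookkeeping.
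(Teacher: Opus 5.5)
Your proof is correct and is essentially the paper's argument. The paper takes your ``alternative'' route: it starts from the identity in the proof of Theorem~\ref{thm.eci.strong.bdd2}, notes $\min\{\alpha_i,\beta_r\}=\alpha_i$, and cancels the $D_1$-terms, while your main route substitutes $\max\{\alpha_i,\beta_r\}=\beta_r$ directly and explicitly verifies the ``similarly'' identity $\frac{1}{2}\sum_{i,r} d_D(u_i,v_r)\beta_r=(n_1+e_1)\xi^C(D_2)+e_1\zeta(D_2)$, which the paper only asserts.
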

\begin{proof}
Put $
\alpha_i=mecc_{D_1}(u_i),\quad
\beta_r=mecc_{D_2}(v_r).
$  From the proof of Theorem~\ref{thm.eci.strong.bdd2}, we have

$$
\begin{aligned}
\xi^C(D_1\boxtimes D_2)
={}&(n_2+e_2)\xi^C(D_1)
+(n_1+e_1)\xi^C(D_2)\\
&+e_2\zeta(D_1)+e_1\zeta(D_2)\\
&-\frac{1}{2}\sum_{i=1}^{n_1}\sum_{r=1}^{n_2}
d_D(u_i,v_r)\min\{\alpha_i,\beta_r\}.
\end{aligned}
$$

If $\alpha_i\leq\beta_r$ for all $i,r$, then
$\min{\alpha_i,\beta_r}=\alpha_i$. Also, from the same proof,

$$
\frac{1}{2}\sum_{i=1}^{n_1}\sum_{r=1}^{n_2}
d_D(u_i,v_r)\alpha_i
=(n_2+e_2)\xi^C(D_1)+e_2\zeta(D_1).
$$

Hence $
\xi^C(D_1\boxtimes D_2)
=(n_1+e_1)\xi^C(D_2)+e_1\zeta(D_2).
$  Similarly, if $\alpha_i\geq\beta_r$ for all $i,r$, then
$\min{\alpha_i,\beta_r}=\beta_r$, and $
\frac{1}{2}\sum_{i=1}^{n_1}\sum_{r=1}^{n_2}
d_D(u_i,v_r)\beta_r
=(n_1+e_1)\xi^C(D_2)+e_1\zeta(D_2).
$
Therefore, $
\xi^C(D_1\boxtimes D_2)
=(n_2+e_2)\xi^C(D_1)+e_2\zeta(D_1).
$\qed
\end{proof}
 \begin{corollary}
Let $D_1$ and $D_2$ be two strongly connected digraphs with $e_1$ and $e_2$ arcs respectively. If $mecc(u_i)\ge mecc(v_r)$
for $i=1,2,\ldots,n_1$ and $r=1,2,\ldots,n_2$, then
$
\xi^C(D_1\boxtimes D_2)
=
\bigl(n_2+e\bigr)\xi^C(D_1)
+
e_1\zeta(D_1).
$
\end{corollary}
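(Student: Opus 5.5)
The plan is to reduce the statement to the identity for $\frac{1}{2}\sum_{i}\sum_{r} d_D(u_i,v_r)\alpha_i$ already computed in the proof of Theorem~\ref{thm.eci.strong.bdd2}, without passing through the $\min$ correction term. Put $\alpha_i=mecc_{D_1}(u_i)$ and $\beta_r=mecc_{D_2}(v_r)$, and write $D=D_1\boxtimes D_2$. By Lemma 5.2.1 of \cite{tj2022study}, $mecc_D(u_i,v_r)=\max\{\alpha_i,\beta_r\}$. The hypothesis $\alpha_i\ge\beta_r$ for all $i,r$ then gives $mecc_D(u_i,v_r)=\alpha_i$ at every vertex of the product. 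Hence
\begin{align*}
\xi^C(D)=\frac{1}{2}\sum_{i=1}^{n_1}\sum_{r=1}^{n_2} d_D(u_i,v_r)\,\alpha_i .
\end{align*}

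Next I would substitute the degree formula
\begin{align*}
d_D(u_i,v_r)=d_{D_1}(u_i)+d_{D_2}(v_r)+d^+_{D_1}(u_i)d^+_{D_2}(v_r)+d^-_{D_1}(u_i)d^-_{D_2}(v_r)
\end{align*}
and sum over $r$ first. This uses $\sum_r d_{D_2}(v_r)=2e_2$ and $\sum_r d^{\pm}_{D_2}(v_r)=e_2$, exactly as in the proof of Theorem~\ref{thm.eci.strong.bdd2}. The four pieces contribute, in order:
\begin{itemize}
\item $n_2\,\xi^C(D_1)$ from the first term;
\item $e_2\,\zeta(D_1)$ from the second term;
\item $e_2\,\xi^C(D_1)$ from the two product terms together, since they combine into $\frac{e_2}{2}\sum_i\bigl(d^+_{D_1}(u_i)+d^-_{D_1}(u_i)\bigr)\alpha_i$.
\end{itemize}
Adding these gives $\xi^C(D_1\boxtimes D_2)=(n_2+e_2)\xi^C(D_1)+e_2\zeta(D_1)$. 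This is the symmetric counterpart of the preceding theorem, obtained by interchanging the roles of $D_1$ and $D_2$.

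There is no real obstacle; the only delicate point is bookkeeping the coefficients correctly. The computation shows that the intended constants are $e_2$ in both places, i.e.\ the coefficient $(n_2+e)$ should read $(n_2+e_2)$ and $e_1\zeta(D_1)$ should read $e_2\zeta(D_1)$. As a sanity check I would test the formula on $D_2$ a single arc pair ($K_2$ with both arcs), where all quantities are easy to evaluate directly.
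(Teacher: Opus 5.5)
Your proof is correct and follows the paper's route. The paper also rests on $mecc_D(u_i,v_r)=\max\{\alpha_i,\beta_r\}$ and the same degree expansion. It merely phrases the step as substituting $\min\{\alpha_i,\beta_r\}=\beta_r$ into the identity from the proof of Theorem~\ref{thm.eci.strong.bdd2}, so that the $\beta_r$-terms cancel; this is equivalent to your direct substitution $mecc_D(u_i,v_r)=\alpha_i$. The paper's proof likewise concludes $(n_2+e_2)\xi^C(D_1)+e_2\zeta(D_1)$, which confirms your correction of the typos $e$ and $e_1$ in the stated formula.
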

Expanding the eccentric connectivity index for n digraphs we have the following.
\begin{theorem}
Let $D_1,D_2,\ldots,D_n$ be strongly connected digraphs. Then\\
$\begin{aligned}
\xi^C(D_1\boxtimes\cdots\boxtimes D_n)&=
\frac{1}{2}
\sum_{v_i\in V(D_i)}
\left[
\prod_{i=1}^{n}\left(1+d^+_{D_i}(v_i)\right)
+
\prod_{i=1}^{n}\left(1+d^-_{D_i}(v_i)\right)-2
\right]\\
&\times 
\max_{1\le i\le n}\left\{mecc_{D_i}(v_i)\right\}.\end{aligned}$
\end{theorem}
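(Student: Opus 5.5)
The plan is to compute the two ingredients of $\xi^C(D)$ for $D=D_1\boxtimes\cdots\boxtimes D_n$ separately and then combine them: the m-eccentricity of a vertex of $D$, and its total degree $d^+_D+d^-_D$. The sum in the statement is read as running over all tuples $(v_1,\ldots,v_n)\in V(D)$.

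\textbf{Step 1 (eccentricity).} Extend Lemma 5.2.1 of \cite{tj2022study} by induction on $n$. The strong product is associative up to isomorphism: the arc condition ``$x_i=y_i$ or $(x_i,y_i)\in A(D_i)$ for all $i$, with at least one arc'' is compatible with grouping the first $n-1$ factors. So we may write $D=(D_1\boxtimes\cdots\boxtimes D_{n-1})\boxtimes D_n$. The two-factor lemma together with the induction hypothesis then gives
\[
mecc_D(v_1,\ldots,v_n)=\max\bigl\{mecc_{D_1\boxtimes\cdots\boxtimes D_{n-1}}(v_1,\ldots,v_{n-1}),\,mecc_{D_n}(v_n)\bigr\}=\max_{1\le i\le n} mecc_{D_i}(v_i).
\]
We also note that $D$ is strongly connected, which follows inductively from the two-factor case, so every m-eccentricity is finite.

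\textbf{Step 2 (degrees).} Count the out-neighbours of $x=(v_1,\ldots,v_n)$ directly from the definition. An out-neighbour $y$ is determined by choosing, independently in each coordinate $i$, either $y_i=v_i$ or an out-neighbour $y_i$ of $v_i$ in $D_i$. That gives $1+d^+_{D_i}(v_i)$ choices per coordinate. We must exclude the single choice $y=x$, since at least one coordinate has to use an arc. Distinct choices give distinct vertices $y$, because $D_i$ has no loops, and there are no parallel arcs. Hence
\[
d^+_D(x)=\prod_{i=1}^n\bigl(1+d^+_{D_i}(v_i)\bigr)-1,
\]
and the same argument for in-neighbours gives $d^-_D(x)=\prod_{i=1}^n\bigl(1+d^-_{D_i}(v_i)\bigr)-1$. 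Adding, $d^+_D(x)+d^-_D(x)$ equals the bracketed expression in the statement. As a check, for $n=2$ this agrees with the degree formula used in the proof of Theorem~\ref{thm.eci.strong.bdd2}.

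\textbf{Step 3 (combine).} Substitute both expressions into $\xi^C(D)=\frac12\sum_{x\in V(D)}(d^+_D(x)+d^-_D(x))\,mecc_D(x)$, which yields the stated formula.

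The only real obstacle is the inductive justification in Step 1, specifically that the iterated strong product is the $n$-fold product as defined. One has to check that an arc of $(D_1\boxtimes\cdots\boxtimes D_{n-1})\boxtimes D_n$ corresponds exactly to the coordinatewise condition in the $n$-fold definition. This is a routine case split on whether the first block, the last factor, or both move along an arc.
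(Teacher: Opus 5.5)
Your proposal is correct. The paper states this theorem without any proof, and your argument is exactly the natural $n$-factor extension of the two ingredients the paper uses for $n=2$ in the proof of Theorem~\ref{thm.eci.strong.bdd2}: iterate Lemma~5.2.1 of \cite{tj2022study}, after checking that the iterated strong product is the $n$-fold one, to get $mecc_D(v_1,\ldots,v_n)=\max_i mecc_{D_i}(v_i)$, and count neighbours coordinatewise to get $d^+_D+d^-_D=\prod_i(1+d^+_{D_i}(v_i))+\prod_i(1+d^-_{D_i}(v_i))-2$. Nothing is missing. If you prefer, you can skip the associativity check by proving $\vec d_D(x,y)=\max_i\vec d_{D_i}(x_i,y_i)$ directly: for $\ge$, project a walk onto each coordinate; for $\le$, run the coordinate geodesics in parallel and let finished coordinates stay put.
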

 
\section{Conclusion}

In this paper, the eccentric connectivity index with respect to the metric maximum distance is studied for Cartesian and strong products of strongly connected digraphs. Bounds for the eccentric connectivity index of both products and the corresponding equality cases are obtained, and the results are extended to products of several digraphs. For the Cartesian product $D_1\square D_2$, the upper bound obtained in terms of the parameters of the factor digraphs improves the corresponding diameter-based upper bound. The converse part of the  Proposition \ref{meccentricity equality} given in \cite{tj2022study} need not be true when $D_1=D_2$, see figure \ref{fig:D-cartesian-D}.
\begin{figure}[H]
\centering

\begin{tikzpicture}[
    >=Stealth,
    vertex/.style={
        circle,
        draw,
        minimum size=4mm,
        inner sep=1pt,
        font=\scriptsize, 
    },
    ecc/.style={
        draw=none,
        font=\scriptsize
    },
    scale=.7
]

\node [vertex] (a) at (-2.5,0) {$a$};

\node [vertex] (b) at (-2.5,-2.5) {$b$};

\node [vertex] (c) at (-2.5,-5) {$c$};

\node [vertex] (d) at (-2.5,-7.5) {$d$};

\draw[->,bend right=14] (a) to (b);
\draw[->,bend right=14] (b) to (a);

\draw[->,bend right=22] (a) to (c);
\draw[->,bend right=22] (c) to (a);

\draw[->,bend right=18] (b) to (d);

\draw[->] (c) to (b);

\draw[->,bend right=25] (d) to (a);

\node [vertex] (a) at (0,-10) {$a$};

\node [vertex] (b) at (2.5,-10) {$b$};

\node [vertex] (c) at (5,-10) {$c$};

\node [vertex] (d) at (7.5,-10) {$d$};

\draw[->,bend left=14] (a) to (b);
\draw[->,bend left=14] (b) to (a);

\draw[->,bend left=22] (a) to (c);
\draw[->,bend left=22] (c) to (a);

\draw[->,bend left=18] (b) to (d);

\draw[->] (c) to (b);

\draw[->,bend left=25] (d) to (a);

\node[vertex] (aa) at (0,0) {};

\node[vertex] (ba) at (2.5,0) {};

\node[vertex] (ca) at (5,0) {};

\node[vertex] (da) at (7.5,0) {};

\node[vertex] (ab) at (0,-2.5) {};

\node[vertex] (bb) at (2.5,-2.5) {};

\node[vertex] (cb) at (5,-2.5) {};

\node[vertex] (db) at (7.5,-2.5) {};

\node[vertex] (ac) at (0,-5) {};

\node[vertex] (bc) at (2.5,-5) {};

\node[vertex] (cc) at (5,-5) {};

\node[vertex] (dc) at (7.5,-5) {};

\node[vertex] (ad) at (0,-7.5) {};

\node[vertex] (bd) at (2.5,-7.5) {};

\node[vertex] (cd) at (5,-7.5) {};

\node[vertex] (dd) at (7.5,-7.5) {};

%

\draw[->,bend left=14] (aa) to (ba);
\draw[->,bend left=14] (ba) to (aa);

\draw[->,bend left=22] (aa) to (ca);
\draw[->,bend left=22] (ca) to (aa);

\draw[->,bend left=18] (ba) to (da);

\draw[->] (ca) to (ba);

\draw[->,bend left=30] (da) to (aa);

\draw[->,bend left=14] (ab) to (bb);
\draw[->,bend left=14] (bb) to (ab);

\draw[->,bend left=22] (ab) to (cb);
\draw[->,bend left=22] (cb) to (ab);

\draw[->,bend left=18] (bb) to (db);

\draw[->] (cb) to (bb);

\draw[->,bend left=30] (db) to (ab);

\draw[->,bend left=14] (ac) to (bc);
\draw[->,bend left=14] (bc) to (ac);

\draw[->,bend left=22] (ac) to (cc);
\draw[->,bend left=22] (cc) to (ac);

\draw[->,bend left=18] (bc) to (dc);

\draw[->] (cc) to (bc);

\draw[->,bend left=30] (dc) to (ac);

\draw[->,bend left=14] (ad) to (bd);
\draw[->,bend left=14] (bd) to (ad);

\draw[->,bend left=22] (ad) to (cd);
\draw[->,bend left=22] (cd) to (ad);

\draw[->,bend left=18] (bd) to (dd);

\draw[->] (cd) to (bd);

\draw[->,bend left=30] (dd) to (ad);


\draw[->,bend right=14] (aa) to (ab);
\draw[->,bend right=14] (ab) to (aa);

\draw[->,bend right=22] (aa) to (ac);
\draw[->,bend right=22] (ac) to (aa);

\draw[->,bend right=18] (ab) to (ad);

\draw[->] (ac) to (ab);

\draw[->,bend right=30] (ad) to (aa);

\draw[->,bend right=14] (ba) to (bb);
\draw[->,bend right=14] (bb) to (ba);

\draw[->,bend right=22] (ba) to (bc);
\draw[->,bend right=22] (bc) to (ba);

\draw[->,bend right=18] (bb) to (bd);

\draw[->] (bc) to (bb);

\draw[->,bend right=30] (bd) to (ba);

\draw[->,bend right=14] (ca) to (cb);
\draw[->,bend right=14] (cb) to (ca);

\draw[->,bend right=22] (ca) to (cc);
\draw[->,bend right=22] (cc) to (ca);

\draw[->,bend right=18] (cb) to (cd);

\draw[->] (cc) to (cb);

\draw[->,bend right=30] (cd) to (ca);

\draw[->,bend right=14] (da) to (db);
\draw[->,bend right=14] (db) to (da);

\draw[->,bend right=22] (da) to (dc);
\draw[->,bend right=22] (dc) to (da);

\draw[->,bend right=18] (db) to (dd);

\draw[->] (dc) to (db);

\draw[->,bend right=30] (dd) to (da);

\end{tikzpicture}

\caption{The digraph $D$ and Cartesian product $D\square D$ are self-centered, but $D$ does not satisfy two-sided eccentricity property; the $m$-eccentricity is displayed in red.}
\label{fig:D-cartesian-D}
\end{figure}

\bibliographystyle{splncs04}
\bibliography{ref}

\end{document}